\newtheorem{theorem}{Theorem}
\newtheorem{lemma}{Lemma}
\newtheorem{corollary}{Corollary}
\newtheorem{conjecture}{Conjecture}
\newtheorem{proposition}{Proposition}
\newtheorem{question}{Question}
\def\beq{\begin{equation}}\def\eeq{\end{equation}}
\def\beqn{\begin{eqnarray}}\def\eeqn{\end{eqnarray}}
\def\qed{\ifhmode\unskip\nobreak\fi\quad\ifmmode\Box\else$\Box$\fi}
\newcommand{\x}{\underline{x}}
\newcommand{\y}{\underline{y}}
\newcommand{\e}{\underline{1}}
\newcommand{\diff}{\mathrm{diff}}
\title{Around a biclique cover conjecture}
\author{Guantao Chen\\
\small Department of Mathematics and Statistics\\[-1ex]
\small Georgia State University\\[-1ex]
\small Atlanta, Georgia 30303\\[-1ex]
\small \texttt{matgtc@langate.gsu.edu}\\[-0.4ex]\\
\and \hspace*{100pt}Shinya Fujita\hspace*{100pt}\\
\small Department of Integrated Design Engeneering,\\[-1ex]
\small Maebashi Institute of Technology,\\[-1ex]
\small 460-1 Kamisadori, Maebashi 371-0816, Japan\\[-1ex]
\small \texttt{shinyaa@mti.biglobe.ne.jp}\\[-0.4ex]\\
\and Andr\'as Gy\'arf\'as\\
\small Computer and Automation Research Institute\\[-1ex]
\small Hungarian Academy of Sciences\\[-1ex]
\small 1518 Budapest, P.O. Box 63, Hungary\\[-1ex]
\small \texttt{gyarfas@sztaki.hu}\\[-0.4ex]\\
\and Jen\H{o} Lehel\\
\small Department of Mathematical Sciences\\[-1ex]
\small The University of Memphis, Tennessee, USA\\[-1ex]
\small \texttt{jlehel@memphis.edu}\\[-0.4ex]\\
\and \'Agnes T\'oth\\
\small Department of Computer Science and Information Theory\\[-1ex]
\small Budapest University of Technology and Economics\\[-1ex]
\small 1521 Budapest, P.O. Box 91, Hungary\\[-1ex]
\small \texttt{tothagi@cs.bme.hu}\\[-0.4ex]\\[2.2ex]\\}
\begin{document}
 \maketitle
\newpage
\begin{abstract}
We address an old (1977) conjecture of a subset of the authors (a
variant of Ryser's conjecture): in every $r$-coloring of the edges
of a biclique $[A,B]$ (complete bipartite graph), $A\cup B$ can be
covered by the vertices of at most $2r-2$ monochromatic connected
components. We reduce this conjecture to design-like conjectures,
where the monochromatic components of the color classes are
bicliques $[X,Y]$ with nonempty blocks $X$ and $Y$.
It can be also assumed that each color class covers $A\cup B$
(spanning), moreover, no $X$-blocks or $Y$-blocks properly
contain each other (antichain property). We prove this reduced
conjecture for $r\le 5$.

We show that the width (the number of bicliques) in every color
class of any spanning $r$-coloring is at most $2^{r-1}$ (and this is
best possible).
On the other hand there exist spanning $r$-colorings such that the width of every color class is $\Omega(r^{3/2})$.

We discuss the dual form of the conjecture which relates to
transversals of intersecting and cross-intersecting $r$-partite
hypergraphs.
\end{abstract}

\section{Introduction, summary of results.}

A special case of a conjecture generally attributed to Ryser
(appeared in his student, Henderson's thesis, \cite{HE}) states that
intersecting $r$-partite hypergraphs have a transversal of at most
$r-1$ vertices (see Conjecture \ref{Ryshyp} in Section \ref{dual}).
This conjecture is open for $r\ge 6$. It is  trivially true
for $r=2$,  the cases
$r=3,4$  are solved in \cite{GY} and in \cite{DU}, and for the case $r=5$, see
 \cite{DU} and  \cite{TU}.
The following equivalent formulation is from \cite{GY},\cite{GYEP}:

\begin{conjecture}\label{rys}
\label{Rysconn}
In every $r$-coloring of the edges of a
complete graph, the vertex set can be covered by the vertices of at most $r-1$
monochromatic connected components.
\end{conjecture}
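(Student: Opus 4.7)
I would try to establish Conjecture~\ref{Rysconn} by showing it is equivalent to Ryser's hypergraph conjecture (Conjecture~\ref{Ryshyp}) and then invoking the known cases: the statement is trivial for $r=2$ and solved in \cite{GY,DU,TU} for $r=3,4,5$, so the equivalence would immediately yield those cases of Conjecture~\ref{Rysconn} and isolate the remaining open cases $r\ge 6$ inside a single, widely studied problem rather than forcing a direct graph-theoretic attack.

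\textbf{The reduction.} Given an $r$-edge-coloring of $K_n$, let $\mathcal{C}_i$ denote the family of monochromatic components in color $i$, and form an $r$-partite hypergraph $H$ with parts $\mathcal{C}_1,\ldots,\mathcal{C}_r$ by associating to each vertex $v\in V(K_n)$ the hyperedge $e_v$ consisting, for every $i$, of the color-$i$ component through $v$. Any two vertices $u,v$ are joined in $K_n$ by an edge of some color $i$, hence lie in a common color-$i$ component, so $e_u\cap e_v\ne\emptyset$ and $H$ is intersecting. A transversal of $H$ of size $t$ is precisely a set of $t$ monochromatic components whose vertex union covers $V(K_n)$, so $\tau(H)\le r-1$ is literally the desired conclusion. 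For the converse, given an intersecting $r$-partite hypergraph with parts $U_1,\ldots,U_r$, I would color the complete graph on the hyperedges by assigning to the pair $\{e,f\}$ some color $i$ with $(e\cap f)\cap U_i\ne\emptyset$; two hyperedges sharing a vertex of $U_i$ then end up in the same color-$i$ component, so covers by monochromatic components correspond to transversals, recovering the equivalence noted in \cite{GY,GYEP}.

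\textbf{Main obstacle.} After this reduction everything rests on Conjecture~\ref{Ryshyp}, which is open for $r\ge 6$ despite decades of effort, so the real difficulty is not the translation but the hypergraph statement itself. The small-$r$ proofs proceed by locating a color whose largest component already covers many vertices and then recursing on the complement, but the available margin collapses as $r$ grows, and the extremal affine/projective plane examples (which attain $r-1$ exactly) show that no slack is available. Attacking the colored-graph side directly would most likely require one to leverage the antichain and spanning reductions developed later in this paper in order to impose enough rigidity on the monochromatic components to drive a global induction on $r$; I expect this structural control, rather than the equivalence step, to be the genuine bottleneck.
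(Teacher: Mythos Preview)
The statement you are addressing is a \emph{conjecture}, and the paper does not prove it; it merely records that it is the graph-theoretic reformulation of Ryser's hypergraph conjecture (Conjecture~\ref{Ryshyp}) from \cite{GY,GYEP}, notes the known cases $r\le 5$, and gives the dual translation in Section~\ref{dual}. Your reduction to the intersecting $r$-partite hypergraph setting is correct and is exactly the equivalence the paper cites, so on that point your write-up agrees with the paper and there is nothing further to compare.

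One concrete misstep: in your final paragraph you suggest attacking the colored-$K_n$ side by importing ``the antichain and spanning reductions developed later in this paper.'' Those reductions are specific to the \emph{bipartite} conjecture (Conjecture~\ref{biprys}); the paper explicitly remarks, just after stating claim~A in Section~\ref{equi}, that ``a similar reduction is not known for Conjecture~\ref{rys}.'' The point is that in the complete-graph setting a monochromatic component need not be a clique, and there is no analogue of the step that forces bi-equivalence structure. So that proposed line of attack is not available, and your plan, like the paper's, leaves the $r\ge 6$ cases genuinely open.
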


Gy\'arf\'as and Lehel discovered a bipartite version of this conjecture
\cite{GY}, \cite{LEH}. A complete bipartite graph $G$ with non empty vertex classes $X$ and $Y$ is referred to as a {\it biclique} $[X,Y]$ in
this paper, and $X$ and $Y$ will be called the {\it blocks} of this biclique. Given an edge coloring, a
{\it monochromatic component} means a connected
component of the subgraph of any given color. The number of components in a given color is called the {\em width of the color.}

\begin{conjecture}\label{biprys}
\label{bipconn}
In every $r$-coloring of the edges of a
biclique, the vertex set can be covered by the vertices of at most $2r-2$
monochromatic components.
\end{conjecture}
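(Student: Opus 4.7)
\emph{Reductions.} I would first canonicalize the coloring. Given an arbitrary $r$-coloring of $[A,B]$, the plan is to argue that one may assume: (i) each monochromatic component is itself a biclique (fill in all missing edges of that color inside each component --- this does not alter the component partition); (ii) every color is \emph{spanning}, i.e., every vertex meets at least one edge of each color, since an isolated vertex in color $i$ is effectively an $(r{-}1)$-colored problem at that vertex; (iii) within each color class the $A$-side blocks form an antichain under inclusion, and likewise the $B$-side blocks, since a block contained in another is redundant for covering purposes. These three reductions match the ``design-like'' reformulation announced in the abstract.

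\medskip

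\emph{Base cases.} For $r=2$ the bound is a classical result of Gy\'arf\'as and Lehel (\cite{GY},\cite{LEH}) giving a cover by $2=2r-2$ monochromatic components in any $2$-coloring of $[A,B]$: indeed, once one is in spanning biclique form, the two red bicliques (if red has two components) themselves cover $A\cup B$, and if red has $\ge 3$ bicliques a short path argument makes the blue subgraph connected. For $r=3,4,5$ I would invoke the reduced design-like conjectures, whose verification constitutes the main combinatorial content of the present paper.

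\medskip

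\emph{Inductive step and main obstacle.} For $r\ge 6$ the natural attempt is induction on $r$: find a color $i$ whose bicliques can be described by a small substructure (say, a color with a block of constant size, or a color of minimum width), peel off a cover of that color, and cover the residual subgraph using $2(r-1)-2$ components supplied by the inductive hypothesis. The hard part is precisely why the conjecture has been open since 1977: the width of a single color can be as large as $2^{r-1}$ (another result announced in the abstract), so no color is automatically ``small'', and the antichain condition alone does not force a bounded design structure on the blocks. A proof past $r=5$ likely requires a genuinely new structural lemma --- perhaps a sharp pigeonhole on the multiset of widths across the $r$ colors, or exploitation of the cross-intersection properties that appear in the dual hypergraph formulation discussed later in the paper.
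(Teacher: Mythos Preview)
The statement is a \emph{conjecture}; the paper does not prove it in general, only the reductions of Section~\ref{equi} and the cases $r\le 5$ (Theorem~\ref{smallr}). You correctly concede that $r\ge 6$ remains open, so what you have is not a proof of the conjecture but an outline that mirrors the paper's partial progress. On that level your sketch is accurate in its conclusions.

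The genuine gap is in how you justify the reductions. For (i), ``filling in all missing edges of that color inside each component'' is illegitimate: every such edge already carries some other color $j$, and recoloring it alters the component structure of color~$j$, so it is simply false that ``this does not alter the component partition''. The paper's reduction to bi-equivalence graphs works entirely differently and directly: if a component $C$ in color~$1$ is not a biclique, choose non-adjacent $x\in X$, $y\in Y$ in $C$; the edge $xy$ has some color~$2$, and then $C$, the color-$2$ component through $xy$, and the $2(r-2)$ monochromatic stars at $x$ and $y$ in colors $3,\dots,r$ already cover $V(G)$ with $2r-2$ pieces. Reductions (ii) and (iii) follow the same template: whenever the spanning or antichain hypothesis fails, the paper exhibits an explicit cover of size $2r-2$ (see the passages preceding claim~B and Conjecture~\ref{anticonj}). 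It is \emph{not} that a non-spanning color ``is effectively an $(r-1)$-colored problem'' (that would only give $2r-4$ components at one vertex, covering nothing else), nor that a nested block is ``redundant for covering purposes'' (it may well appear in some minimum cover). Your heuristics point in the right direction but do not produce valid arguments; the actual mechanism in all three reductions is a direct construction of a $2r-2$ cover from the failure of the hypothesis.
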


First we show here that Conjecture \ref{bipconn}, if true, is best
possible. Let $G^*=[A,B]$ be a biclique with $|A|=r-1,|B|=r!$, and
label the vertices of $A$ with $\{1,2,\dots,r-1\}$ and those of $B$
with the $(r-1)$-permutations of the elements of $\{1,2,\dots,r\}$. For $k\in A$ and $\pi=j_1j_2\dots j_{r-1}\in B$, let the color of the
edge $k\pi$ be $j_k$.

Since each vertex in $B$ is incident with $r-1$ edges of distinct color, every monochromatic component of $G^*$ is a star with $(r-1)!$ leaves centered at $A$.
Furthermore,  $G^*$ has a vertex cover with $2r-2$ monochromatic components, just take the $r$ monochromatic stars centered at vertex $r-1$, and add one edge from each vertex $k=1,2,\dots,r-2$ of $A$.

\begin{proposition}
\label{strange} {\rm (\cite{GY})} The vertex set of $G^*$ cannot be covered with
less than $2r-2$ monochromatic components.
\end{proposition}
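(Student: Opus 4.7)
My plan is to parametrize covers by the sets of colors used at the star centers, translate the requirement that every $\pi \in B$ be covered into a Hall-type condition, and then finish with a short quadratic minimization.

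Every nontrivial monochromatic component of $G^*$ is a star $S_{k,c}$ centered at some $k\in A$ with leaves $\{\pi\in B:j_k=c\}$, and singleton components (isolated vertices of $B$) may be assumed absent from an optimal cover: if some $\pi\in B$ is used as a singleton then $\pi$ is missed by every chosen star, so replacing the singleton by the star $S_{1,j_1}$ does not increase the total count. Thus a minimum cover is specified by a set $C_k\subseteq\{1,\dots,r\}$ for each $k\in A$ (the colors for which $S_{k,c}$ is chosen); the number of components equals $\sum_{k\in A}|C_k|$, and $|C_k|\ge 1$ since only stars centered at $k$ can cover $k\in A$.

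The crux is the rewrite of ``every $\pi\in B$ is covered''. Since $\pi=j_1\cdots j_{r-1}$ is covered iff $j_k\in C_k$ for some $k$, the cover is valid iff the family of complements $(\overline{C_k})_{k\in A}$ admits \emph{no} system of distinct representatives in $\{1,\dots,r\}$. By Hall's theorem this forces some $I\subseteq A$ of size $i$ with $|\bigcup_{k\in I}\overline{C_k}|\le i-1$, equivalently $|\bigcap_{k\in I}C_k|\ge r-i+1$, which in particular gives $|C_k|\ge r-i+1$ for every $k\in I$.

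Combined with $|C_k|\ge 1$ for $k\notin I$, this yields $\sum_{k\in A}|C_k|\ge i(r-i+1)+(r-1-i)$, and I would finish by minimizing the right-hand side over $i\in\{1,\dots,r-1\}$. The expression is concave quadratic in $i$, equals $2r-2$ precisely at the endpoints $i=1$ and $i=r-1$, and is strictly larger in between, giving the desired lower bound of $2r-2$. The one conceptually nontrivial step is spotting the Hall-theorem translation; the singleton reduction and the quadratic minimization are routine.
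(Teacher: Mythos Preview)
Your proof is correct and close in spirit to the paper's, but packaged differently. Both arguments reduce to minimizing the same concave quadratic $-i^2+ri+r-1$ over $1\le i\le r-1$. The difference is how the witness $i$ is produced. The paper sorts the multiplicities $a_k=|C_k|$ as $a_1\le\cdots\le a_{r-1}$ and shows by a direct greedy construction that if $a_i\le i$ for every $i$ then one can build an uncovered permutation color by color; hence some $a_i\ge i+1$, and then the $r-i$ largest $a_k$'s each contribute at least $i+1$, yielding $(i-1)+(i+1)(r-i)$. You instead invoke Hall's theorem on the complements $\overline{C_k}$ to obtain a defective set $I$ of size $i$ with $|C_k|\ge r-i+1$ on $I$, yielding $i(r-i+1)+(r-1-i)$. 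The two bounds are literally the same polynomial under the substitution $i\leftrightarrow r-i$, and the paper's greedy step is precisely the textbook proof that Hall's condition guarantees an SDR. Your Hall formulation is arguably more transparent and makes the structure of the obstruction explicit; the paper's version has the virtue of being self-contained. You are also slightly more careful about why singleton components may be excluded, which the paper simply assumes.
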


\begin{proof} Let $\cal{C}$ be a cover of $V(G^*)=A\cup B$ by monochromatic stars
centered in $A$. Let $a_k$ denote the number of monochromatic
stars of $\cal{C}$ on vertex $ k\in A$. We
may assume that $a_1\le a_2\le \dots \le a_{r-1}$.

We show first that $a_i\ge i+1$ holds for some $1\leq i\leq r-1$.
Suppose on the contrary that $a_i\leq i$, for all $i$.
Thus we can select a color $j_{r-1}\in\{1,\dots,r\}$ different from
the $a_{r-1}$ colors of all stars of $\cal{C}$ centered at $r-1$.
Then we can select a new color $j_{r-2}\in\{1,\dots,r\}\setminus
\{j_{r-1}\}$ different from the $a_{r-2}$ colors of all stars of
$\cal{C}$ centered at $r-2$, etc. Thus we end up by selecting
$r-1$ distinct colors $j_1,\dots,j_{r-1}$. This is a contradiction
since the $(r-1)$-permutation $j_1j_2\dots, j_{r-1}\in B$ is
uncovered by $\cal{C}$.

Now let  $a_i\ge i+1$, for some $1\leq i\leq r-1$, then the number of
stars in $\cal{C}$ is
$$\sum_{k=1}^{r-1} a_k=\sum_{k=1}^{i-1} a_k+\sum_{k=i}^{r-1} a_k\ge
(i-1)+(i+1)(r-i).$$
Because  $$(i-1)+(i+1)(r-i)=-i^2+ri +r-1\ge 2r-2$$ holds for every $1\leq i\leq
r-1$, the proposition  follows.
\end{proof}

It is worth noting that Conjecture \ref{biprys} (similarly to
Conjecture \ref{rys}) becomes obviously true if the number of
monochromatic components is just one larger than stated in the
conjecture.

\begin{proposition}
\label{2r-1} {\rm (\cite{GY})}
In every $r$-coloring of the edges of a
biclique, the vertex set can be covered by the vertices of at most $2r-1$
monochromatic components.
\end{proposition}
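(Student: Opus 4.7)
The plan is to use a simple two-vertex covering argument. Pick any vertex $v\in A$ and any vertex $u\in B$; I will cover $B$ using the monochromatic components that meet $v$, and cover $A$ using the monochromatic components that meet $u$, then observe that the component containing the edge $uv$ gets counted in both families.

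First I would note that since $[A,B]$ is a biclique, every vertex of $B$ is joined to $v$ by some edge, necessarily colored by one of the $r$ colors. Hence every vertex of $B$ lies in the monochromatic component of $v$ in that color. There is at most one component of each color through $v$, so at most $r$ monochromatic components suffice to cover $B$. Symmetrically, at most $r$ monochromatic components through $u$ cover all of $A$.

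Next, let $c$ be the color of the edge $uv$. The $c$-colored monochromatic component through $v$ contains $u$, and the $c$-colored monochromatic component through $u$ contains $v$; these are the same component. Thus the two families of at most $r$ components each overlap in at least one component, and their union has size at most $2r-1$. Since this union covers $A\cup B$, the proposition follows.

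There is no real obstacle here; the argument is essentially a counting/bookkeeping step, and the only point to keep in mind is that a vertex might not be incident to edges of all $r$ colors (in which case the bound is even stronger). The same idea will not save a second component in general, which is consistent with Proposition \ref{strange} showing that $2r-2$ is the true extremal value and is out of reach of this elementary argument.
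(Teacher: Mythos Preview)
Your argument is correct and is essentially the same as the paper's: both fix an edge $uv$, take the at most $r$ monochromatic components through $u$ and the at most $r$ through $v$, and observe that the component in the color of $uv$ is shared, yielding $2r-1$. The only cosmetic difference is that the paper phrases the cover in terms of monochromatic stars centered at $u$ and $v$ (plus the double star in the color of $uv$), while you phrase it in terms of the full components through each vertex.
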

\begin{proof}
For an edge $uv$ of the biclique $G$, consider the monochromatic
component (double star) formed by the edges in the color of $uv$
incident to $u$ or $v$. In all other colors consider the
(at most $r-1$) monochromatic stars centered at $u$ and at $v$. This
gives $2r-1$ monochromatic components covering  the vertices of
$G$. \end{proof}

In Section \ref{equi} we show that Conjecture \ref{biprys} can be
reduced to design-like conjectures: one can assume that all colors span {\em bi-equivalence graphs}, i.e. graphs whose
components are complete bipartite graphs. It is worth noting that a similar reduction is not known for Conjecture
\ref{rys}.

Our results in Sections \ref{234} and \ref{5} (Corollary \ref{23width}, Theorems \ref{r=4}, \ref{r=5}) imply

\begin{theorem}\label{smallr} Conjecture \ref{biprys} holds for $r= 2,3,4,5$.
\end{theorem}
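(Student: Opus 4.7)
The plan is to use the reduction established in Section \ref{equi}, which reduces Conjecture \ref{biprys} to the case of spanning \emph{bi-equivalence} colorings: colorings in which every color class is a disjoint union of bicliques, every color class spans $A\cup B$, and within each color the families of $A$-blocks and $B$-blocks both form antichains under inclusion. With this reduction in hand, I would dispatch the values $r\in\{2,3,4,5\}$ separately, each tailored to the structural constraints the reduction provides.

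For $r=2,3$ I would invoke the general width bound announced in the abstract: in any spanning $r$-coloring, every color has width at most $2^{r-1}$. For $r=2$ this gives $\le 2=2r-2$, and for $r=3$ it gives $\le 4=2r-2$; in either case a single color class in the reduced setting already covers $A\cup B$ by the required number of monochromatic bicliques, which is precisely Corollary \ref{23width}.

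For $r=4,5$ the bound $2^{r-1}$ (namely $8$ and $16$) exceeds the target $2r-2=6,8$, so no single color suffices and one must combine bicliques from several colors. Here I would fix one color and exploit its antichain structure: its bicliques $[X_j,Y_j]$ simultaneously partition $A$ and $B$, and for each such biclique the remaining $r-1$ colors, when restricted to the edges between $X_j$ and $B\setminus Y_j$ (or between $A\setminus X_j$ and $Y_j$), inherit an antichain-like property. One then selects a few bicliques from the other colors to cover the uncovered sides of each $[X_j,Y_j]$, and sums these selections across $j$ to get at most $2r-2$ bicliques in total.

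The main obstacle is Theorem \ref{r=5}. The case $r=5$ is the first one in which the gap between $2^{r-1}$ and $2r-2$ is substantial, so the argument cannot rest on the width bound alone and must genuinely use the interplay of several color classes. I would expect the hard part to be a detailed case analysis of how the antichain structures of five colors intersect, together with a careful charging argument that guarantees the selected bicliques from different colors overlap enough on one side, yet differ enough on the other, to keep the total count at $8$.
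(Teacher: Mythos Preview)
Your plan for $r=2,3$ via the $2^{r-1}$ bound is fine (and yes, that is enough for Conjecture~\ref{biprys} in those cases, although Corollary~\ref{23width} is actually a sharper statement: some color has width at most $r$, not $2r-2$). The problem is your treatment of $r=4,5$.

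You write that for $r=4,5$ ``no single color suffices and one must combine bicliques from several colors.'' This is precisely what the paper shows is \emph{not} necessary. Theorems~\ref{r=4} and~\ref{r=5} prove that under the antichain hypothesis some single color class has width at most $4$ (for $r=4$) and at most $8$ (for $r=5$); in both cases this is $\le 2r-2$, so a homogeneous cover works. The bound $2^{r-1}$ from Theorem~\ref{main} is the worst case over \emph{all} spanning colorings, with no antichain assumption; the point of the antichain reduction is exactly that it buys you a much smaller minimum width. Your plan throws that away and then tries to recover by mixing colors.

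Moreover, the mixing strategy you sketch (fix one color, cover the ``uncovered sides'' of each of its bicliques $[X_j,Y_j]$ using the other colors, then sum over $j$) has no mechanism to keep the total at $2r-2$: if the fixed color has $w$ bicliques you are already summing over $w$ pieces, and $w$ can be as large as $2^{r-1}$. The paper's actual arguments for Theorems~\ref{r=4} and~\ref{r=5} go in a different direction: one finds a large block $\{x_1,\dots,x_k\}$ in some color, classifies the vertices of $Y$ by the \emph{type} $(c_1,\dots,c_k)$ of colors they see toward that block, and then uses the bi-equivalence, antichain, and non-equivalence constraints (the ``Type rule,'' ``Starring rule,'' and ``Distinguishing rules'') to force the set of realizable types to be so restricted that some color $c$ hits every type, whence $\{B_c[x_i]\}$ (plus possibly one more biclique) covers everything. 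For $r=5$ there is an additional counting lemma (Lemma~\ref{optimal}) to guarantee a block of size $9$ exists. None of this is a charging argument across color classes; it is a structural case analysis that produces a homogeneous cover.
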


Suppose we have a partition of a biclique into bi-equivalence graphs. We call a pair $u,v$ in one of the cliques of a biclique  {\it equivalent} if $u$ and $v$ belong to the same block in every bi-equivalence graph.  Since equivalent vertices do not change the number of components needed for a cover, the following result shows that for every fixed $r$ one has to consider only finitely many colorings.

\begin{theorem} \label{rfact} Suppose a biclique $[A,B]$ is partitioned into
$r$ bi-equivalence graphs and no two vertices of $A$ are
equivalent. Then $\max\{|A|,|B|\}\le r!$
and equality is possible.
\end{theorem}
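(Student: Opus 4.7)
The plan is to prove $|A| \le r!$ by induction on $r$, derive $|B| \le r!$ by the symmetric argument, and observe that Proposition~\ref{strange}'s construction $G^*$ (with $|B|=r!$) already realizes the bound. The base case $r=1$ is immediate: a single bi-equivalence graph covering all edges of $[A,B]$ is a single biclique, so every pair in $A$ is equivalent and $|A| \le 1 = 1!$.

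For the inductive step I would fix an arbitrary $b^* \in B$ and split $A = A_1 \cup \cdots \cup A_r$ according to the color $i(a)$ of the edge $ab^*$. Writing $Y_j$ for the color-$j$ $Y$-block containing $b^*$ (when nonempty), one sees that $A_j$ coincides with the color-$j$ $X$-block paired with $Y_j$. Hence for every $a \in A_j$ and every $c \neq j$, the color-$c$ neighbourhood $N_c(a)$ lies in $B \setminus Y_j$, since edges between $A_j$ and $Y_j$ are forced to be color $j$. I would then pass to the sub-biclique $[A_j, B \setminus Y_j]$, on which only the $r-1$ colors different from $j$ appear.

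The crux is to verify that this sub-biclique with its restricted colors satisfies the hypothesis of the theorem for $r-1$. First, for each $c \neq j$, any color-$c$ component of the original coloring that meets $A_j$ has its $Y$-side entirely in $B \setminus Y_j$, so restriction produces a biclique $[X \cap A_j, Y]$; these pieces partition the color-$c$ edges of the sub-biclique, so the restricted color $c$ is a bi-equivalence graph. Second, if $a, a' \in A_j$ were equivalent in the restriction, they would agree on $N_c$ for every $c \neq j$ and also share $N_j = Y_j$, so they would already be equivalent in the original coloring---a contradiction. The induction hypothesis then gives $|A_j| \le (r-1)!$, and summing yields $|A| \le r \cdot (r-1)! = r!$.

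The main obstacle I anticipate is bookkeeping: checking that the restricted coloring really is an $(r-1)$-coloring by bi-equivalence graphs, that non-equivalence transfers to the restriction, and that the degenerate case $Y_j = B$ is handled separately (there every $a \in A_j$ has $N_j(a)=B$ and $N_c(a)=\emptyset$ for $c \neq j$, forcing $|A_j|=1$ without any need to invoke induction). The symmetric argument starting from a fixed $a^* \in A$ gives $|B| \le r!$, and $G^*$ shows the bound is tight.
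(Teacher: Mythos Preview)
Your inductive argument for $|A|\le r!$ is essentially the paper's proof, repackaged as a direct bound rather than a contradiction via pigeonhole: the paper fixes $v\in B$, uses pigeonhole to find a single color class $Y\subset A$ of size exceeding $r!$, and restricts to the sub-biclique $[Y,X]$ where $X$ is the set of vertices of $B$ sending at least two colors to $Y$; you instead bound every $A_j$ by $(r-1)!$ and sum. Your restriction set $B\setminus Y_j$ is slightly larger than the paper's $X$, but either choice works, and your treatment of the degenerate case $Y_j=B$ and your verification that non-equivalence survives the restriction are both correct.

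One caveat worth flagging: the ``symmetric argument'' for $|B|\le r!$ requires that no two vertices of $B$ be equivalent, which is not part of the hypothesis as literally stated. Under the stated hypothesis alone the bound on $|B|$ is false (take $r=2$, $A=\{a_1,a_2\}$, all edges at $a_1$ of color~$1$ and all edges at $a_2$ of color~$2$; then $a_1,a_2$ are non-equivalent but $|B|$ is arbitrary). The paper's own proof also addresses only $|A|$, so the intended hypothesis is presumably that the coloring is reduced on both sides---and under that reading your symmetric argument does go through.
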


It is natural to ask how many monochromatic components (or bicliques) {\em of the same color}  cover all vertices in $r$-colorings of bicliques, i.e. to bound the minimum width of the color classes. Such coverings are called {\em homogeneous} in Section \ref{hom}. In the example $G^*$ used in Proposition \ref{strange} the width of every color class is $(r-1)!+r-1$ (this property of $G^*$ played a role in \cite{GYRSSZ} where coverings by monochromatic cycles have been studied). However, for {\em spanning colorings}, where at least one edge is adjacent in each color to any vertex, the situation is different: using a deep result of Alon \cite{AL}, we show

\begin{theorem}\label{main}
\label{main} In a spanning $r$-coloring the width of every color class is at most $2^{r-1}$ and this is best possible.
\end{theorem}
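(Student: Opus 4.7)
The plan is to reduce to the bi-equivalence setting (as justified in Section~\ref{equi}), in which each color class is a disjoint union of bicliques, and then extract a two-family cross-intersection problem of the type controlled by Alon's theorem. Fix an arbitrary color, say color~$1$, and let its $w_1$ components have $A$-blocks $X_1, \ldots, X_{w_1}$ and $B$-blocks $Y_1, \ldots, Y_{w_1}$. Picking representatives $a_i \in X_i$ and $b_i \in Y_i$ for each $i$ yields a ``compressed'' biclique $K_{w_1, w_1}$ whose diagonal pairs $a_ib_i$ carry color~$1$ and whose off-diagonal edges are colored by the remaining $r - 1$ colors, inheriting the bi-equivalence structure.

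For each super-vertex $a_i$ (respectively $b_j$) I record the tuple $\tilde\alpha(i) = (\alpha_2(i), \ldots, \alpha_r(i))$ (respectively $\tilde\beta(j)$) of its color-$c$ component indices for $c \ge 2$. Using the bi-equivalence property (an edge $a_i b_j$ has color $c$ iff $\alpha_c(i) = \beta_c(j)$) together with the fact that the diagonal pairs are colored~$1$, one verifies that $\tilde\alpha(i)$ and $\tilde\beta(j)$ agree in \emph{exactly one} coordinate when $i \neq j$ and in \emph{no} coordinate when $i = j$; in particular both maps from $[w_1]$ into $\prod_{c = 2}^r [w_c]$ are injective. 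This data fits the hypothesis of Alon's theorem \cite{AL}: associating to each $a_i$ the product of linear forms $P_i(y) = \prod_{c=2}^r \bigl(1 - [y_c = \alpha_c(i)]\bigr)$ gives $P_i(\tilde\beta(j)) = \delta_{ij}$, and Alon's bound on the effective dimension of the polynomial subspace spanned by such products of $r - 1$ linear forms yields $w_1 \le 2^{r-1}$.

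For the tightness statement I would take $A = B = \{0,1\}^{r-1}$ and color the edge between $v \in A$ and $w \in B$ by the smallest coordinate on which $v$ and $w$ disagree if $v \neq w$, and by color~$r$ if $v = w$; the color-$r$ class is then a perfect matching with $2^{r-1}$ components, and a direct induction shows the remaining colors have widths $2, 4, \ldots, 2^{r-1}$, giving a spanning bi-equivalence $r$-coloring matching the bound. The main technical obstacle is justifying the exponential dimension bound underlying $w_1 \le 2^{r-1}$: the naive linear-algebra argument applied over $\mathbb{R}$ to the $w_1 \times w_1$ matrix with $(i, j)$-entry $1 - \delta_{ij}$ only delivers the polynomial bound $w_1 \le \sum_{c \neq 1} w_c$, and isolating the restricted polynomial space in which the $P_i$'s are forced to live---so that its dimension is exponential in $r$ but independent of the individual $w_c$---is where Alon's deep result does the real work.
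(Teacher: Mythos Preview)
Your overall architecture coincides with the paper's: pick one representative from each block of the color-$1$ bi-equivalence graph, obtain a copy of $K_{t,t}$ with the diagonal matching in color~$1$ and the rest partitioned into $r-1$ spanning bi-equivalence graphs, then invoke Alon~\cite{AL}. Your tightness example is also correct; it simply unrolls the paper's recursive doubling construction into an explicit $\{0,1\}^{r-1}$ coordinatization.

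Where your write-up falls short is precisely at the step you flag as ``the main technical obstacle''. The polynomials $P_i(y)=\prod_{c=2}^r\bigl(1-[y_c=\alpha_c(i)]\bigr)$ are not products of \emph{linear} forms in any useful sense (the indicators $[y_c=\alpha_c(i)]$ are not linear in $y_c$ once $w_c>2$), and there is no statement in \cite{AL} of the form ``products of $r-1$ linear forms span a space of dimension $2^{r-1}$'' that you can cite. What Alon actually proves, and what the paper uses verbatim, is the graph statement
\[
eq(G)\;\ge\;\log_2|V(G)|-\log_2\Delta(\overline{G}).
\]
The paper then supplies the short bridge you are missing: for a bipartite graph $G$ one has $eqbi(G)\ge eq(G^{+})-1$ (add intra-class edges and one extra equivalence for the bipartition), and your representative construction shows $eqbi(K_{t,t}^{-})\le r-1$. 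Since $(K_{t,t}^{-})^{+}$ has $2t$ vertices and complement of maximum degree~$1$, Alon's inequality gives $r-1\ge eqbi(K_{t,t}^{-})\ge \log_2(2t)-1$, hence $t\le 2^{r-1}$. Your cross-intersection data $(\tilde\alpha,\tilde\beta)$ is exactly the encoding of this equivalence cover, so you are one clean sentence away from a proof---but the sentence is ``apply Alon's $eq$-bound to $(K_{t,t}^{-})^{+}$'', not an unspecified dimension estimate for your polynomial space.

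One small further point: the reduction in Section~\ref{equi} to bi-equivalence partitions is an argument about the \emph{cover number} (it produces a cheap cover when a component fails to be a biclique); it says nothing about bounding the width. This is harmless here because $h(r)$ is defined in the paper directly for spanning bi-equivalence partitions, so no reduction is needed---but your first sentence suggests you are relying on Section~\ref{equi} for something it does not provide.
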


It is tempting to conjecture (in fact one of the authors did) that Conjecture \ref{biprys} is true in a stronger form: {\em some} color class in every spanning $r$-coloring has width at most $2r-2$. However,

\begin{theorem}\label{widthlower} There are spanning $r$-partitions of
bicliques such that the width of every partition class is $\Omega(r^{3/2})$.
\end{theorem}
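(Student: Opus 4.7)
The plan is to exhibit an explicit spanning $r$-partition of a biclique $[A,B]$ in which every color class has at least $\Omega(r^{3/2})$ connected components. Since the width of any color class is bounded above by $\min(|A|,|B|)$, both parts must have size at least $\Omega(r^{3/2})$; I would therefore take $|A|=|B|=n$ with $n$ a polynomial in $r$ (e.g.\ $n=\Theta(r^{3/2})$ or slightly larger).

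I would look for the construction in a finite-geometric framework. A natural setting is a prime power $q$ with $r=q+1$ (or $r\approx q^2$ for a different scaling), indexing $A=B$ by points of an affine plane $AG(2,q)$ or a higher-dimensional variant, with colors corresponding to parallel classes of affine subspaces. The standard affine-plane coloring---where edge $(a,b)$ receives the color of the parallel class containing the line $\overline{ab}$---only gives width exactly $q=\Theta(r)$ per color, so one needs a further refinement. I would introduce a secondary algebraic invariant (for instance, a quadratic form, a cross-ratio, or a further coordinate projection in a higher-dimensional ambient space) that splits each parallel-class color into many smaller sub-components while still covering all of $A\cup B$.

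After defining the coloring, the verification proceeds in three steps: first, check that each edge receives exactly one color; second, verify the spanning property, which follows from the coverage property of the underlying geometric structure; and third, count the components of each color class. Typically the third step reduces to showing that two vertices $a,a'\in A$ lie in the same color-$c$ component if and only if $a-a'$ lies in some small subspace $V_c$ (coming from the secondary invariant), so that the component count equals the index $[A:V_c]$, which we arrange to be $\Omega(r^{3/2})$.

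The main obstacle is finding the right refinement. Coarse refinements preserve spanning but only give width $\Theta(r)$; too fine refinements destroy the spanning property. The target exponent $3/2$ is suggestive of the K\H{o}v\'ari--S\'os--Tur\'an regime for $K_{2,2}$-free bipartite graphs, and is achieved by incidence structures of projective planes; I would look to constructions based on projective-plane incidences (for example, flags or a suitable bipartite partial linear space) to achieve the precise balance. An alternative route is semi-random: perturb a standard affine coloring by a random algebraic layer and show concentration of the per-color component count, but the deterministic geometric approach is more likely to produce a clean construction with the exact exponent $3/2$.
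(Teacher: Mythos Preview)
Your proposal contains no construction; it is a survey of directions you might try. You correctly note that the standard affine-plane coloring gives width only $\Theta(r)$, and you suggest refining it with a ``secondary algebraic invariant,'' but you never specify one---indeed, you explicitly name ``finding the right refinement'' as the main unresolved obstacle. The projective-plane-incidence and semi-random alternatives are likewise only mentioned, not carried out. As written there is nothing to verify, and none of the heuristics you offer (K\H{o}v\'ari--S\'os--Tur\'an, cross-ratios, quadratic forms) points to an actual coloring with the required properties.

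The paper's construction is entirely different and uses no finite geometry. Take $K_{s,s}$, color the $2s$ edges of a fixed Hamiltonian cycle \emph{red} and the remaining $r=s^2-2s$ edges \emph{blue}. Each blue edge extends uniquely with $s-1$ red edges to a $1$-factor of $K_{s,s}$, and a double count shows that each red edge lies in exactly $p=\binom{s-1}{2}$ of these $r$ one-factors. Now blow up every vertex into a $p$-set: replace each blue edge by a copy of $K_{p,p}$, and replace each red edge by a $1$-factorization of $K_{p,p}$ into $p$ perfect matchings, assigning one matching to each of the $p$ blue edges whose $1$-factor contains that red edge. The resulting $r$ color classes (one per blue edge) are spanning bi-equivalence subgraphs of $K_{sp,sp}$, each consisting of one $K_{p,p}$ together with $s-1$ copies of $pK_2$, so each has width $1+(s-1)p=\Omega(s^3)=\Omega(r^{3/2})$.
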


Theorem \ref{widthlower} naturally suggests the following question.

\begin{question}\label{homrcon}
\label{rmonohomo} Determine or estimate $g(r)$, the largest $m$ such that there is a spanning partition
of a biclique into $r$ bi-equivalence graphs, all with width at least $m$.
\end{question}

Very recently T. Terpai \cite{TER} improved the bound of Theorem \ref{widthlower} to $g(r)=\Omega(r^2)$.

\medskip

In Section \ref{dual} we formulate the dual forms of Conjectures
\ref{rys}, \ref{biprys} and show their relation to transversals of
intersecting and cross-intersecting $r$-uniform hypergraphs.

\section{Equivalent conjectures, notations.}\label{equi}

Here we prove some equivalent forms of Conjecture \ref{bipconn} leading to a
design-like conjecture (Conjecture \ref{anticonj}). In this spirit an $r$-coloring will be also called a partition
of the edge set into $r$ subgraphs.

\medskip

\noindent {\bf A.} {\it If a biclique is partitioned into $r$
bi-equivalence graphs, then its vertex set can be covered by at most $2r-2$
biclique components.}\\

Since the bi-equivalence graphs in claim A can be color
classes of an $r$-coloring, validity of Conjecture \ref{bipconn} implies that
claim A is also true.

On the other hand, suppose we have an $r$-coloring of a biclique
$G=[X,Y]$  such that some monochromatic
component $C$, say in color $1$, is not a biclique.
 Let $x\in X,y\in Y$ be non-adjacent vertices in $C$, w.l.o.g. $xy$ has
color $2$. Observe that the $2(r-2)$ monochromatic stars in colors
$3,\dots, r$ centered at $x$ and at $y$, plus the component $C$, and
the component in color $2$ containing $xy$ cover $V(G)$, leading to
a cover with at most $2r-2$ monochromatic components. Thus
Conjecture \ref{bipconn}  follows from claim A.\\

Let us call a bi-equivalence graph partition $G_1,\dots,G_r$ of biclique $G$ a
\it spanning partition \rm if each vertex $v\in V(G)$ is included in every
$V(G_i)$,  $i=1,\dots,r.$
Notice that it is enough to prove claim A for spanning partitions. Indeed, assuming that $v\not\in V(G_1)$
and $vw\in E(G_2)$, just take the at most $r-2$ bicliques from  $G_3,\dots,G_r$
that contain $v$ and add the at most $r$ bicliques  from $G_1,G_2,\dots,G_r$
that contain $w$, together they form a cover of all vertices of $G$ with at most
$2r-2$ bicliques. Thus we have the following equivalent form of claim A:\\

\noindent {\bf B.} {\it If a biclique has a spanning partition into $r$
bi-equivalence graphs, then its vertex set can be covered by at most
$2r-2$ biclique components.}\\

Let a biclique $[X,Y]$ be partitioned into the bi-equivalence graphs
$G_1, G_2, \dots, G_r$. Then we will say that $i$ is the color of
the edges in $G_i$ $(i=1,\dots,r)$. Any connected component of $G_i $
is a biclique, its vertex classes will called
{\it blocks in color $i$}.\\

Denote by $B_i[u_1, \dots, u_k]$ the connected component of $G_i$
which contains the vertices $u_1, \dots, u_k$, if they are in the
same component of $G_i$, and in this case let $X_i[u_1, \dots,
u_k]=X\cap V(B_i[u_1, \dots, u_k])$ and $Y_i[u_1, \dots, u_k]=Y\cap
V(B_i[u_1, \dots, u_k])$ be the corresponding blocks. Otherwise set
$B_i[u_1, \dots, u_k]=\emptyset$, $X_i[u_1, \dots, u_k]=Y_i[u_1,
\dots, u_k]=\emptyset$.

Note that $B_i[u]\neq\emptyset$ for any $u\in V(G)$ in a spanning partition. In
the sequel we will also use the fact that the blocks $X_i[u]$ and $X_i[v]$ are either disjoint or
equal for any color $i\in\{1,2,\dots, r\}$ and any vertices $u,v\in V(G)$.\\

Let us call a spanning bi-equivalence graph partition $G_1, \dots, G_r$ of
biclique $G$ an \emph{antichain partition} if no two blocks properly
contain each other, that is if no colors $i, j\in \{1,\dots, r\}$
and no vertices $u, v\in V(G)$ exist such that $X_i[u]\subsetneq
X_j[v]$ or $Y_i[u]\subsetneq Y_j[v]$.

If $v\in X$ and $|X_i[v]|=1$ (or $v\in Y$ and $|Y_i[v]|=1$) then we call vertex $v$ a
{\it singleton} block in color $i$. Note that if a coloring has the antichain property, then a
singleton block in some color is a singleton in every color, in this case we just say that $v$ is
a singleton.\\

It turns out that it is enough to prove claim B
for antichain partitions. Indeed, assume that in
a spanning partition  there are two blocks properly containing each
other, that is $X_1[z]\subsetneq X_2[x]$, for some biclique
components $B_1[z]$ and $B_2[x]$. Assume that $x\notin X_1[z]$, and
let $y\in Y_1[z]$. The color of the edge $xy$ is neither $1$ nor
$2$, w.l.o.g. it is $3$. Because $B_3[y]=B_3[x]$ and
$X_1[y]=X_1[z]\subseteq X_2[x]$,
 the collection
$$\{ B_i[x]\ :\ i\in\{ 1,2,\dots, r\}\}\cup
\{ B_i[y]\ :\ i\in\{ 1,2,\dots, r\}\setminus\{1,3\}\}$$ is a cover
with at most $2r-2$ monochromatic components. Thus we obtain the
following equivalent form of Conjecture \ref{biprys}.

\begin{conjecture}\label{anticonj}
If a biclique has an antichain partition into $r$ bi-equivalence
graphs, then its vertex set can be covered by at most $2r-2$
biclique components.
\end{conjecture}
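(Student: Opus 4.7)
The plan is to attack Conjecture \ref{anticonj} by induction on $r$, aiming to extend Theorem \ref{smallr} beyond $r=5$. The base case is immediate, so the focus is on the inductive step given an antichain partition of $[X,Y]$ into bi-equivalence graphs $G_1, \ldots, G_r$.

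First I would fix a vertex $v \in X$ and observe that the family $\{B_1[v], B_2[v], \ldots, B_r[v]\}$ consists of $r$ bicliques covering $\{v\} \cup Y$, since the partition is spanning. The task reduces to covering $X \setminus \{v\}$ with at most $r-2$ additional bicliques. The natural move is to pick a companion $u \in Y$ and add $\{B_2[u], \ldots, B_r[u]\}$: if the color of $vu$ is $1$ then $B_1[v] = B_1[u]$, so the combined family has size $2r-1$, recovering Proposition \ref{2r-1}. The content of the conjecture is precisely that one further biclique can be eliminated.

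To find that additional saving, I would choose $v$ and $u$ with extremal properties. A promising choice is to pick $v$ so that some block $X_j[v]$ is a singleton, since the antichain property then forbids any block of any other color from properly containing $\{v\}$, forcing strong rigidity on the other $X_i[v]$. Then for a carefully selected $u \in Y_j[v]$ I would try to show either that two of the bicliques in the double cover may be merged into a single biclique component, or that one of them is redundant because its block is contained in (and therefore, by the antichain hypothesis, equal to) a block already used in another color. A case split on whether singletons exist, and on how many, appears unavoidable; the fact (used in Theorem \ref{rfact}) that the number of relevant equivalence classes is bounded by $r!$ should keep this case analysis finite even though the number of cases likely grows quickly with $r$.

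The main obstacle is precisely this final saving. In the tight example $G^*$ of Proposition \ref{strange} every vertex of $A$ is a singleton and every monochromatic component is a star, so any proof must recognize and exploit this extremal pattern; yet naive induction or vertex deletion tends to lose one color at a cost of one biclique, yielding only the $2r-1$ bound. The jump from $r=5$ to general $r$ seems to require a genuinely global invariant rather than a local reduction, most plausibly a weighted count over the antichain structure of blocks, or a potential function that decreases under a suitable ``contraction'' of equivalent vertices. Producing such an invariant, and verifying it is compatible with both the antichain property and the $G^*$-type extremal configurations, is where I would expect the real difficulty to lie, and is presumably why the conjecture is still open for $r \ge 6$.
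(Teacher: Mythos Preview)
This statement is a \emph{conjecture}, and the paper does not prove it in general; it establishes only the cases $r\le 5$ (via Corollary \ref{23width}, Theorem \ref{r=4}, Theorem \ref{r=5}). Your proposal is candid about this: you recover the $2r-1$ bound, you identify that the entire content lies in saving one more biclique, and you correctly conclude that the inductive/local-reduction strategy stalls exactly there. So what you have is not a proof but an honest outline of why the obvious attack fails --- which matches the paper's status for $r\ge 6$.

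Where your plan diverges from the paper is in method for the small cases that \emph{are} settled. You propose induction on $r$ together with a vertex-pair reduction and a hoped-for global invariant. The paper does none of this. For each of $r=4$ and $r=5$ it proves something strictly stronger than Conjecture \ref{anticonj}: it shows that \emph{some single color class} already has width at most $2r-2$ (in fact at most $r$ for $r\le 4$, and at most $8$ for $r=5$), so a homogeneous cover suffices. The machinery is a direct combinatorial analysis of ``types'' $Y(c_1,\dots,c_k)$ determined by a large block, governed by a Type rule (bi-equivalence forbids certain type pairs), a Starring rule (antichain forces certain containments to be equalities), and Distinguishing rules (reduced colorings forbid too much agreement). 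For $r=5$ two preparatory lemmas first force a block of size at least $9$, and then a three-case pigeon-hole argument on color multiplicities in the types finishes. No induction on $r$ is used, and no attempt is made to pass from $r$ to $r+1$.

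The practical point is that your singleton/antichain-rigidity idea is in the spirit of the paper's Starring rule, but the paper exploits it locally inside a fixed $r$ rather than as a bridge between values of $r$. Theorem \ref{widthlower} (and Terpai's improvement) shows that the homogeneous-cover strengthening the paper uses cannot survive for large $r$, so neither the paper's method nor your inductive scheme, as stated, is expected to resolve the conjecture in general.
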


Finally we note an important reduction used extensively in the proofs
later. We recall that a pair $u,v\in A$ or $u,v\in B$ {\it equivalent} if in every bi-equivalence graph of the
bi-equivalence graph partition of the biclique $G$, $u$ and $v$ belong to the same block.  We may assume w.l.o.g. that there is no pair of equivalent vertices, and in this case we say that {\it the
coloring is reduced}. Indeed, if there were two vertices $u,v$ such
that $uv\notin E(G)$ and for every $w\in V(G)$ with $uw, vw\in
E(G)$, the edges $uw$ and $vw$ have the same color, then $v$ could
be added to any monochromatic component of $G-\{v\}$ containing $u$.
Hence if Conjecture \ref{anticonj} holds for $G-\{v\}$ then it also
holds for $G$.

\medskip

\noindent {\textbf{Theorem \ref{rfact}.}} {\em Suppose a biclique $[A,B]$ has a partition into
$r$ bi-equivalence graphs and no two vertices of $A$ are
equivalent. Then $\max\{|A|,|B|\}\le r!$
and equality is possible.}

\begin{proof}It is easy to check that the partition of $G^*$ into bi-equivalence graphs
in Proposition \ref{strange} is a reduced one, hence the second
statement follows.

To prove the first statement, the case $r=1$ is obvious. Assuming it
is true for some $r\ge 1$, suppose indirectly that $|A|\ge (r+1)!+1$
in some partition into $r+1$ bi-equivalence graphs. Then for any
fixed $v\in B$ there are $r!+1$ edges of the same color from $v$,
say in color $r+1$, to $Y\subset A$. Let $X$ be the set of vertices
in $B$ that send edges in at least two different colors to $Y$. By
the assumption $X\ne \emptyset$ and since color class $r+1$ is a
bi-equivalence graph, $[X,Y]$ has no edge of color $r+1$. This means
no two vertices of $Y$ are equivalent in the induced $r$-partition
on $[X,Y]$, and thus $|Y|>r!$ contradicts the inductive hypothesis.
\end{proof}

\section{Homogeneous covering.}\label{hom}
\label{homo} In 1998 Guantao Chen asked whether a stronger version
of claim B can be true, i.e. whether $2r-2$
biclique components \it of the same bi-equivalence graph \rm
$G_i$, $1\leq i\leq r,$ can cover $[X,Y]$. Call such cover a {\it
homogeneous cover}. Although this is not true in general (see Theorem \ref{widthlower} below), the question introduces interesting
variants of the cover problem.

Given $r$, let $g(r)$ be the smallest $m$ such that in every biclique $B$ with a
spanning partition into $r$ bi-equivalence graphs  $G_1,\dots,G_r$, there is a
partition class $G_i$ with width at most $m$. We shall prove that $g(r)$ exists,
in a stronger form: for every $r$, there is a smallest $m=h(r)$ such that in
every spanning partition of a biclique into $r$ bi-equivalence graphs, the
width of \it every partition class \rm is at most $m$.

\medskip

\noindent {\textbf{Theorem \ref{main}.}} {\em $h(r)=2^{r-1}$.}

\begin{proof} To see that $h(r)\ge 2^{r-1}$ consider the following
easy recursive construction to partition a biclique into $r$ bi-equivalence
graphs such that the maximum width is $2^{r-1}$. The case $r=1$ is obvious.
Given such a spanning partition of $B=K_{n,n}$ into $r$ bi-equivalence classes,
take two vertex disjoint copies of $B$ and place two bicliques crosswise as the
$r+1$-th partition. This way a spanning partition of $K_{2n,2n}$ is obtained
into $r+1$ bi-equivalence graphs and the width of every partition class is
doubled - apart from the $(r+1)$-th class which has width two.

To prove the other direction, $h(r)\le 2^{r-1}$, we need some
definitions. An equivalence graph is a graph whose components are
complete graphs. Let $eq(G)$ denote the minimum number of spanning
equivalence graphs needed to cover the edge set of a graph $G$.
Similarly, for any bipartite graph $G$, let $eqbi(G)$ denote the
minimum number of spanning bi-equivalence graphs needed to cover the
edges of $G$. Let $G^+$ denote the graph obtained from the bipartite
graph $G$ by adding to $E(G)$ all pairs inside the partite classes
of $G$. Let $K_{t,t}^-=K_{t,t}-tK_2$, i.e. $K_{t,t}^-$ is a balanced
biclique from which a perfect matching is removed. We need the next
two straightforward propositions.

\begin{proposition}\label{eqbi}
For any bipartite graph $G$, $eqbi(G)\ge eq(G^+)-1$.\end{proposition}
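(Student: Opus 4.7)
The plan is to turn any cover of $G$ by $k$ spanning bi-equivalence graphs into a cover of $G^+$ by $k+1$ spanning equivalence graphs, by (i) ``fattening'' each biclique component of the cover into a single clique on the union of its two blocks, and (ii) adding one extra equivalence graph whose two cliques are the partite classes $X$ and $Y$ themselves. This immediately gives $eq(G^+)\le eqbi(G)+1$, which is the inequality we want.

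In more detail, write $G=[X,Y]$ and let $H_1,\dots,H_k$ be an edge-cover of $G$ by $k=eqbi(G)$ spanning bi-equivalence graphs. For each $i$, list the biclique components of $H_i$ as $[X^i_j,Y^i_j]$, $j=1,\dots,t_i$, allowing singleton (degenerate) components so that every vertex of $V(G)$ appears in exactly one component. Because these components partition $V(G)$, the sets $X^i_j\cup Y^i_j$ also partition $V(G)$, so we may define $E_i$ to be the spanning equivalence graph on $V(G)$ whose cliques are exactly these unions. Every edge of $H_i$ goes from some $X^i_j$ to $Y^i_j$, hence sits inside the corresponding clique of $E_i$, so $E(H_i)\subseteq E(E_i)$.

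Finally, let $E_{k+1}$ be the spanning equivalence graph on $V(G)$ whose two cliques are $X$ and $Y$. Any edge $e$ of $G^+$ is either an edge of $G$, in which case $e\in H_i\subseteq E_i$ for some $i\le k$, or it lies entirely inside $X$ or entirely inside $Y$, in which case $e\in E_{k+1}$. Thus $E_1,\dots,E_{k+1}$ collectively cover $E(G^+)$, proving $eq(G^+)\le k+1=eqbi(G)+1$ and hence the proposition.

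The construction is essentially forced and the verification is immediate; the only mild subtlety is remembering to include singletons of $H_i$ as degenerate components, so that the ``fattened'' $E_i$ is a bona fide spanning equivalence graph on all of $V(G)$. No real obstacle is anticipated.
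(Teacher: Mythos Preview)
Your proof is correct and follows essentially the same approach as the paper: fatten each biclique component of an optimal bi-equivalence cover into a clique on the union of its blocks, then add one further equivalence graph whose two cliques are $X$ and $Y$. The paper's argument is the same construction stated more tersely; your remark about including singleton components so that each $E_i$ is genuinely spanning is a helpful clarification but not a departure.
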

\begin{proof} Consider an optimal cover of $E(G)$ with $eqbi(G)$
spanning  bi-equivalence graphs and turn them into spanning
equivalence graphs by adding all missing edges to all biclique
components. These plus one more spanning equivalence graph formed by
the two vertex classes of $G$ cover all edges of $G^+$ thus
$eqbi(G)+1$ is an upper bound of $eq(G^+)$.
\end{proof}

\begin{proposition}\label{teqbi}
If $B$ is a biclique and $G=B-E(H)$, where $H$ is a spanning bi-equivalence
subgraph of $B$ with $t\geq 2$ components, then $eqbi(G)=eqbi(K_{t,t}^-)$.
\end{proposition}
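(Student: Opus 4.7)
The key observation is that $G=B-E(H)$ is a vertex ``blow-up'' of $K_{t,t}^-$: if the $t$ components of $H$ are $[X_1,Y_1],\ldots,[X_t,Y_t]$, then $E(G)$ is exactly the set of pairs between $X_j$ and $Y_l$ with $j\neq l$. The plan is to prove both inequalities by passing between covers of $G$ and covers of $K_{t,t}^-$ through this blow-up.

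For the upper bound $eqbi(G)\le eqbi(K_{t,t}^-)$ I would take an optimal spanning bi-equivalence cover $H_1,\ldots,H_k$ of $K_{t,t}^-$ on the vertex set $\{x_1,\ldots,x_t,y_1,\ldots,y_t\}$ and blow each $H_i$ up by replacing every $x_j$ with the block $X_j$ and every $y_j$ with $Y_j$. A component $[\{x_{j_1},\ldots,x_{j_p}\},\{y_{l_1},\ldots,y_{l_q}\}]$ of $H_i$ becomes the biclique $[X_{j_1}\cup\cdots\cup X_{j_p},\,Y_{l_1}\cup\cdots\cup Y_{l_q}]$ of $\hat{H}_i$, so $\hat{H}_i$ is a spanning bi-equivalence graph on $X\cup Y$. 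Any $uv\in E(G)$ with $u\in X_j,\,v\in Y_l$ ($j\neq l$) comes from $x_jy_l\in E(K_{t,t}^-)$ covered by some $H_i$, hence is covered by $\hat{H}_i$.

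The reverse inequality $eqbi(K_{t,t}^-)\le eqbi(G)$ is the delicate direction and where I expect the main obstacle to lie. The naive idea---choose representatives $x_j^*\in X_j$ and $y_j^*\in Y_j$ and restrict each $G_i$ to these $2t$ vertices---fails, because a biclique component of $G_i$ need not actually contain the chosen representative of a block it meets, so the restriction can leave representatives isolated and some restricted components cease to be bicliques. Instead, for each $i$ I would define the ``contracted'' relation on $\{X_1,\ldots,X_t\}\sqcup\{Y_1,\ldots,Y_t\}$ generated by declaring $X_j\sim_i Y_l$ whenever some component $[A_m,B_m]$ of $G_i$ satisfies $A_m\cap X_j\neq\emptyset$ and $B_m\cap Y_l\neq\emptyset$, and take its transitive closure. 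Because $G_i$ is spanning and each of its components is a biclique with both sides non-empty, every $X_j$ and every $Y_l$ is $\sim_i$-related to something, so each equivalence class contains at least one $X$-block and one $Y$-block; letting each class become one biclique component yields a spanning bi-equivalence graph $H_i$ on $\{x_1,\ldots,x_t,y_1,\ldots,y_t\}$. To verify that $\{H_i\}$ covers $K_{t,t}^-$, for any $x_jy_l$ with $j\neq l$ pick $u\in X_j,\,v\in Y_l$; then $uv\in E(G)$ lies in some $G_i$, which forces $X_j\sim_i Y_l$ directly by the defining relation, giving $x_jy_l\in E(H_i)$. Combining the two inequalities proves $eqbi(G)=eqbi(K_{t,t}^-)$.
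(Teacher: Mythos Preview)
Your upper-bound argument (blow up an optimal cover of $K_{t,t}^-$) is fine and matches the paper. The trouble is in the reverse inequality.

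The ``naive'' restriction to a set $T$ of representatives is exactly what the paper does, and it is correct. Your objection---that the restriction may leave some $x_j^*$ isolated in $H_l=G_l[T]$---is a cosmetic issue, not a mathematical obstacle. An induced subgraph of a bi-equivalence graph still has complete bipartite components; for the bound $eqbi(K_{t,t}^-)\le k$ one only needs that every edge $x_j^*y_l^*$ with $j\ne l$ lies in some $H_l$, and this is immediate since such an edge is in $G$, hence in some $G_l$, hence in $G_l[T]$. If one insists that a spanning bi-equivalence graph have no isolated vertex, the downstream application (turning biclique components into cliques and invoking Alon's theorem) is unaffected, since an isolated vertex simply becomes a $K_1$ component of an equivalence graph.

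By contrast, your contraction via transitive closure has a real gap: it can put $X_j$ and $Y_j$ into the same $\sim_i$-class, so that $H_i$ acquires the forbidden edge $x_jy_j\notin K_{t,t}^-$. For instance, with $t\ge 3$ and $|Y_2|\ge 2$, a spanning bi-equivalence subgraph $G_i$ of $G$ may have one component meeting $X_1$ on one side and $Y_2$ on the other, and a second component meeting $X_3$ on one side and both $Y_1$ and $Y_2$ on the other (all these edges join blocks of different indices and hence lie in $G$). Then $X_1\sim_i Y_2\sim_i X_3\sim_i Y_1$, so $X_1$ and $Y_1$ collapse into one class. Your $H_i$ is then not a subgraph of $K_{t,t}^-$ and does not witness $eqbi(K_{t,t}^-)\le k$. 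Reverting to the simple induced-subgraph argument fixes this.
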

\begin{proof}
Suppose $X_i,Y_i$ are the bicliques of $H$, $i=1,2 ,\dots,t$ and
$x_iy_i$ are the removed edges of $K_{t,t}$.

If $\{H_l: 1\le l \le s\}$ is a spanning partition of $K_{t,t}^-$
into bi-equivalence graphs, define $G_l$ by adding all edges of
all bipartite graphs $[X_i,Y_j]$ whenever $x_iy_j$ is an edge of
biclique of $H_l$. This defines $\{G_l: 1\le l \le s\}$ as a
spanning partition of $G$ into bi-equivalence graphs showing that
$eqbi(G)\le s$.

To see the reverse inequality, consider an arbitrary cover of $G$
by spanning bi-equivalence graphs $G_1,\dots,G_k$.
Let $T$ be the subset of $2t$ vertices of $V(G)$ containing one vertex from each partite class of each bipartite component of $H$.
For any $1\le l \le k,$ define $H_l$ as the induced subgraph of $G_l$ on $T$.
Then $\{H_l: 1\le l \le
k\}$ is a spanning partition of $K_{t,t}^-$ into bi-equivalence
graphs showing that $k \ge eqbi(K_{t,t}^-)$.
\end{proof}

The main tool is the following result of Alon \cite{AL}.

\medskip

\noindent {\textbf{Theorem (\cite{AL}).}} {\em Suppose that the maximum degree of the
complement of a graph $G$ is $d$ and $|V(G)|=n$. Then $eq(G)\ge
\log_2n-\log_2d$.}

Suppose indirectly that $B$ is a biclique with a spanning partition into
bi-equivalence graphs $G_1,\dots, G_r$ such that some of them, say $G_1$ has
width $t>2^{r-1}$. Let $G=B-G_1$. Using Propositions \ref{eqbi}, \ref{teqbi}
and Alon's theorem, we obtain that
$$eqbi(G)= eqbi(K_{t,t}^-)\ge eq((K_{t,t}^-)^+)-1\ge
\log_2(2t)-\log_21-1>\log_2(2^r)-1=r-1$$ which is a contradiction
since the $r-1$ bi-equivalence graphs $G_2,\dots, G_r$  partition
$G=B-G_1$. Consequently $t\leq 2^{r-1}$, and  $h(r)=
2^{r-1}$follows. This concludes the proof of Theorem \ref{main}.
\end{proof}

Next we prove Theorem \ref{widthlower}, a lower bound for $g(r)$.

\medskip

\noindent {\textbf{Theorem \ref{widthlower}.}}
{\em There are spanning $r$-partitions of
bicliques such that the width of every partition class is $\Omega(r^{3/2})$}.

\begin{proof} Let $s\ge 3$ be an integer, set $r=(s-2)s$ and
$p={s-1\choose 2}$. We shall construct a spanning $r$-partition of the biclique
$K_{sp,sp}$ into bi-equivalence graphs such that each class will be the disjoint
union of one copy of the biclique $K_{p,p}$ and $s-1$ copies of the matching
$pK_2$. Notice that each of those $r$ classes has width $p(s-1)+1\geq c
r^{3/2},$ with constant $c$.

The construction is as follows. Let us color the edges of a Hamiltonian cycle of
$K_{s,s}$ red, and all the other edges of $K_{s,s}$  blue. Each of the
$s^2-2s=r$ blue edges can be uniquely extended with $s-1$ red edges into a
$1$-factor of $K_{s,s}$. Therefore, each red edge belongs to the same number,
$r(s-1)/2s=p$ such $1$-factors. Now we replace each vertex by a set of $p$
elements, every blue edge with a copy of $K_{p,p}$,  and every red edge with $p$
pairwise disjoint copies of $pK_2$.
 \end{proof}

The lower bound of Theorem \ref{widthlower} is recently improved by T. Terpai, \cite{TER}. In fact his construction is not only spanning but also an antichain partition. What we know about the functions $g$ and $h$ is $\Omega(r^2)=g(r)\le
h(r)=2^{r-1}$, and it is a challenging question how they separate.\\

\section{Bi-equivalence partitions for $r=2,3$ and $4$.}\label{234}

In the present section we prove Conjecture \ref{bipconn} for the small cases in strongest possible form.

\begin{proposition}
\label{r=2} If a biclique $[X,Y]$ is partitioned into at most two
bi-equivalence graphs, then each has at most two (non trivial)
connected components.
\end{proposition}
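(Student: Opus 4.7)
My plan is a direct contradiction argument using only the bi-equivalence property of both color classes. Label the two bi-equivalence graphs in the partition by $G_1, G_2$; since $[X,Y]$ is connected, the case of a single color class is trivial (one component), so I may assume both $G_i$ are nonempty. By symmetry it suffices to rule out the possibility that $G_1$ has three or more non-trivial biclique components, so suppose for contradiction that $[X_1,Y_1]$, $[X_2,Y_2]$, $[X_3,Y_3]$ are three such components of $G_1$. I would fix representatives $x_i\in X_i$ and $y_i\in Y_i$ for $i=1,2,3$.

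The first step is the observation that for $i\neq j$ the vertices $x_i$ and $y_j$ lie in distinct components of $G_1$, so the edge $x_iy_j$ must belong to $G_2$. This immediately places six ``cross'' edges into $G_2$. The next step is to trace the $G_2$-component $C$ containing $x_1$: the edges $x_1y_2,x_1y_3$ put $y_2,y_3\in C$; then $x_2y_3,x_3y_2\in G_2$ bring $x_2$ and $x_3$ into $C$; and $x_2y_1\in G_2$ finally pulls $y_1$ into $C$. Thus all six vertices $x_1,x_2,x_3,y_1,y_2,y_3$ lie in the same connected component of $G_2$.

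The final step, which is where the bi-equivalence hypothesis does the real work, is to observe that $C$ is itself a complete biclique. Consequently the edge $x_1y_1$ must belong to $G_2$. But $x_1,y_1$ lie in the component $[X_1,Y_1]$ of $G_1$, which is also a complete biclique, so $x_1y_1\in G_1$ as well, contradicting the fact that $G_1$ and $G_2$ partition the edge set.

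I do not expect a serious obstacle here: the only subtle point is to notice that having three components on each side of the cross-edge pattern is exactly what forces $y_1$ and $x_1$ into the same $G_2$-component, so that the ``filling in'' property of bi-equivalence graphs yields the forbidden edge. With two components alone this argument would fail (the two cross edges $x_1y_2, x_2y_1$ need not connect), which also explains why the bound two is tight.
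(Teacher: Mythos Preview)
Your proof is correct and follows essentially the same approach as the paper: pick representatives $x_i,y_i$ from three distinct $G_1$-components, note that the cross edges lie in $G_2$, use the bi-equivalence property to force $x_1y_1\in G_2$, and contradict $x_1y_1\in G_1$. The paper is slightly more economical, using only the path $(x_1,y_2,x_3,y_1)$ rather than all six cross edges, but the idea is identical.
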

\begin{proof} Assume on the contrary that $x_jy_j, j=1,2,3,$ are three edges from three
distinct connected components of $G_1$,  where $x_j\in X$ and $y_j\in Y$. Then the path
$(x_1,y_2,x_3,y_1)$ is in $G_2$, but the color of  $x_1y_1$ is not $2$. Hence $G_2$ is not
bi-equivalence graph, a contradiction.
\end{proof}

\begin{proposition}
\label{r=3}
Let a biclique $[X,Y]$ be partitioned into three  bi-equivalence graphs. If  one of those has
 more than three non trivial components, then some of the other two is spanning and has
 two connected components.
\end{proposition}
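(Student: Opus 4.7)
Plan: Denote the nontrivial components of $G_1$ by $C_i=[X_i,Y_i]$, $i=1,\ldots,k$, with $k\ge 4$. The central tool is Proposition~\ref{r=2}: on any sub-biclique of $[X,Y]$ all of whose edges avoid color~1, the resulting 2-coloring by the restrictions of $G_2,G_3$ has each color with at most two nontrivial components.

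I would proceed by case analysis on the number $c_2$ of nontrivial components of $G_2$. The degenerate cases $c_2\in\{0,1\}$ can be ruled out. If $c_2=0$ then $G_1,G_3$ 2-color $[X,Y]$ and Proposition~\ref{r=2} yields $k\le 2$, a contradiction. If $c_2=1$ with unique component $[D,E]$, I would apply Proposition~\ref{r=2} to the sub-biclique $[X\setminus D,Y\setminus E]$ (colored only by $G_1,G_3$) to force at most two of the $C_i$ to be entirely contained there, so at least $k-2\ge 2$ components $C_i$ satisfy $X_i\subseteq D$ or $Y_i\subseteq E$. Fixing such an $i$ with $X_i\subseteq D$, each $x\in X_i$ has color-2 neighbors $E$, color-1 neighbors $Y_i$ and color-3 neighbors $Y\setminus Y_i\setminus E$; bi-equivalence of $G_3$ would then require the $G_3$-component of $x$ to be $[X_i,Y\setminus Y_i\setminus E]$. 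But for any $y\in Y_j\cap(Y\setminus Y_i\setminus E)$ with $j\ne i$, direct computation yields $X_3[y]=X\setminus X_j$, which cannot equal $X_i$ when $k\ge 3$, and distinct $j$'s give distinct $X_3[y]$, contradicting bi-equivalence. The boundary subcase $E=Y\setminus Y_i$ is ruled out similarly: different $X_j$-vertices ($j\ne i$) would share a $G_3$-component but require different $Y$-sides $Y\setminus Y_j$. Hence $c_2\ge 2$, and by symmetry $c_3\ge 2$.

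Assuming $c_2,c_3\ge 2$, I would use a representative argument. Pick $x_i\in X_i$ and $y_i\in Y_i$ for four distinct indices; the induced $K_{4,4}$ has color-1 edges exactly the perfect matching $M=\{x_iy_i\}$, and $K_{4,4}\setminus M$ is 2-colored by the restrictions of $G_2,G_3$, both bi-equivalence and subject to the constraint that no monochromatic biclique component contains both $x_i$ and $y_i$ for any $i$. A finite enumeration of possible $G_2$-configurations (by block sizes) against bi-equivalence of the color-3 complement shows that, up to swapping colors and permuting indices, the only valid pattern has one color (say $G_2$) forming two vertex-disjoint $K_{2,2}$ bicliques corresponding to a 2-2 partition of the four indices, and the other forming four single ``cross'' edges. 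Thus one of the two colors restricted to the eight representatives is spanning with exactly two bi-equivalence components.

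Finally, to lift this to the full biclique: any two 4-subsets of $\{1,\ldots,k\}$ sharing three indices force the same 2-2 partition on the shared indices (by uniqueness of the local pattern), so chaining yields a single global 2-partition $\{1,\ldots,k\}=S\cup T$; bi-equivalence of $G_2$ then forces $G_2=[\bigcup_{i\in S}X_i,\bigcup_{j\in T}Y_j]\cup[\bigcup_{i\in T}X_i,\bigcup_{j\in S}Y_j]$, which is spanning with exactly two components ($G_1$-isolated vertices are placed consistently into $S$ or $T$ by examining their color-2 neighborhoods). The main obstacle is the finite enumeration of $K_{4,4}\setminus M$ patterns together with the global consistency verification in this lifting step, both of which require careful but elementary bookkeeping.
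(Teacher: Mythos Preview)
Your route is genuinely different from the paper's, and the core idea (analyze the coloring on representatives from four $G_1$-components, then lift) can be made to work, but several parts are either unnecessary or underspecified.

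First, the preliminary elimination of $c_2\le 1$ is not needed: once you establish the unique $K_{4,4}\setminus M$ pattern, both restricted color classes already have at least two nontrivial components, so $c_2,c_3\ge 2$ comes for free. Your $c_2=1$ argument is also imprecise as written: from $X_i\subseteq D$ you only get that the $Y$-side of $x$'s $G_3$-component equals $Y\setminus(Y_i\cup E)$, not that its $X$-side equals $X_i$; the contradiction then comes from comparing this $Y$-side with the $Y$-neighbourhood of some $x'\in X_{j'}$ forced into the same component via a vertex $y\in Y_j\setminus E$. This is fixable but not what you wrote.

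Second, in the lifting step you must verify that the \emph{same} color plays the ``two $K_{2,2}$'s'' role on every $4$-subset. Uniqueness up to swapping colors is not enough; you need rigidity. This does hold (overlapping $4$-subsets share enough determined edges to pin down which color is which), but it requires an explicit argument, not just ``uniqueness of the local pattern.''

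By contrast, the paper's proof is much shorter and avoids both the global case split on $c_2$ and the full $K_{4,4}$ enumeration. It works with only three representatives $x_1,x_2,x_3,y_1,y_2,y_3$, whose six off-diagonal edges form a $6$-cycle $C$ in colors $2,3$. Bi-equivalence forbids a monochromatic path of length three on $C$, so the color pattern around $C$ is either $3$--$3$ alternating or $2,2,3,2,2,3$. The fourth component is used only once, to kill the $3$--$3$ case via a short pigeonhole/bi-equivalence argument at $x_4$. In the remaining $2,2,3,2,2,3$ case, the paper observes that any vertex $x\notin X_1[x_1]\cup X_1[x_2]$ can replace $x_3$ in $C$ without changing the forced pattern, and similarly for $Y$-vertices; this substitution argument immediately shows that $G_2$ consists of exactly two spanning bicliques, with no separate lifting or consistency check required. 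The substitution trick is the key efficiency gain over your enumeration-then-globalize approach.
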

\begin{proof}
Assume on the contrary that $x_jy_j, j=1,2,3,4$, are four edges from four distinct connected
 components of $G_1$, where $x_j\in X$ and $y_j\in Y$.

The subgraph of the biclique on the vertex set $\{x_1,x_2,x_3,y_1,y_2,y_3\}$ contains a $6$-cycle $C$
whose edges are colored with $2$ and $3$. Since the color classes are bi-equivalence graphs, $C$ has
no monochromatic path of length more than two.

First assume  that $C$ has three edges of color $2$ (the other three
are colored with $3$). W.l.o.g. we assume that $x_4y_1,x_4y_2\in
E(G_2)$. By the bi-equivalence property, we have
$x_1y_2,x_2y_1\in E(G_3)$. Since $C$ has three edges in $G_2$, we
may assume $y_1x_3,y_2x_3\in E(G_2)$. Observe that the edges
$x_1y_3,x_2y_3$ of $C$ are colored differently from the set
$\{2,3\}$ hence the color of $x_4y_3$ is neither $2$ nor $3$, a
contradiction.

Therefore $C$ has four edges in one color and two edges in the other
color. W.l.o.g. we assume that the colors follow each other along
the cycle $C=(x_1,y_3,x_2,y_1,x_3,y_2,x_1)$ as $2,2,3,2,2,3$.  Then
for every vertex $x\in X\setminus (X_1[x_1]\cup X_1[x_2]\cup X_1[x_3])$ we
obtain that $xy_1,xy_2\in E(G_2)$. Observe that this is also true
for every $x\in X_1[x_3]$, since the $(2,3)$-coloring pattern along
the $6$-cycle $C^\prime=(C-x_3)+x$ uniquely determines the color
of the two edges at $x$.

In the same way one obtains that   $X\setminus (X_1[x_1]\cup X_1[x_2])$ and $Y_1[y_1]\cup Y_1[y_2]$
induce a biclique in $G_2$, since, for $i=1,2$,  any vertex $y\in Y_1[y_i]$ can replace  $y_i$
in the cycle $C$ without altering the
$(2,3)$-coloring pattern along the modified cycle. By symmetry of $X$ and $Y$, we obtain that
$Y\setminus (Y_1[y_1]\cup Y_1[y_2])$ and $X_1[x_1]\cup X_1[x_2]$ induce a biclique of $G_2$ as well.

Therefore $G_2$ is spanning and has two connected components.
\end{proof}

The propositions above imply immediately
\begin{corollary}\label{23width} For $r=2,3$, in any spanning partition of a biclique into $r$ bi-equivalence graphs some color class has width at most $r$.
\end{corollary}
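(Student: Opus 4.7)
The plan is to derive this corollary immediately from Propositions \ref{r=2} and \ref{r=3}, treating the two cases separately; essentially no new work is needed.

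For $r=2$, I would simply invoke Proposition \ref{r=2}: both color classes have at most two nontrivial components, so in particular each has width at most $2=r$, which is even stronger than the corollary requires.

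For $r=3$, I would argue by contradiction. Assume every color class of the spanning $3$-partition has width at least $4$. Pick any class, say $G_1$; because the partition is spanning, every vertex of the biclique lies in a biclique component of $G_1$ (with both blocks nonempty), so every component of $G_1$ is nontrivial. Thus $G_1$ has more than three nontrivial components, and Proposition \ref{r=3} forces one of $G_2,G_3$ to be spanning with exactly two connected components, hence of width $2$. This contradicts the standing assumption that every color class has width at least $4$, so some class must have width at most $3=r$.

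The only subtle point --- and it is not really an obstacle --- is checking that the hypothesis ``more than three non trivial components'' required by Proposition \ref{r=3} is actually met by $G_1$. In a spanning bi-equivalence graph partition each vertex is contained in some nonempty biclique of every color, so every component carries at least one edge, and the width defined in the introduction agrees with the count of nontrivial components. With this observation the two citations yield the corollary in a line or two each.
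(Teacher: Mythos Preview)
Your proposal is correct and matches the paper's intent exactly: the paper simply says ``The propositions above imply immediately'' and gives no further details, so your two short arguments from Propositions \ref{r=2} and \ref{r=3} are precisely what is meant. The only point you spell out beyond the paper is the observation that in a spanning partition every component of every $G_i$ contains an edge, so width coincides with the number of nontrivial components; this is indeed the one thing worth saying, and your treatment of it is fine.
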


With the antichain assumption Corollary \ref{23width} extends for $r=4$ as well:

\begin{theorem}
\label{r=4} If a biclique has an antichain partition into four
bi-equivalence graphs then the width of some color class is at most $4$.
\end{theorem}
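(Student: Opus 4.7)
The plan is to proceed by contradiction: assume $[X,Y]$ admits an antichain partition into bi-equivalence graphs $G_1,G_2,G_3,G_4$ with every color class of width at least $5$; by the reductions in Section~\ref{equi} we may further assume the partition is reduced and spanning.

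First, I would select edges $x_iy_i\in E(G_1)$ for $i=1,\ldots,5$ from five distinct components of $G_1$. The $20$ off-diagonal edges $x_iy_j$ ($i\ne j$) on the ten resulting vertices are then colored by $\{2,3,4\}$, giving a $3$-coloring of $K_{5,5}^-$. The central local tool, extending the $6$-cycle argument of Proposition~\ref{r=3}, is: for any triple $\{i,j,k\}\subseteq\{1,\ldots,5\}$, the six vertices $\{x_l,y_l:l\in\{i,j,k\}\}$ span a $K_{3,3}$ whose three diagonal edges $x_ly_l$ are in color $1$ while the remaining six edges form a $6$-cycle colored by $\{2,3,4\}$ whose three distance-$3$ chords are precisely these color-$1$ diagonals. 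Consequently no color $c\in\{2,3,4\}$ can contain a monochromatic $P_4$ on the $6$-cycle: such a path would, by bi-equivalence, force the opposite chord into color $c$, contradicting that it is in color $1$.

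Next I would exploit this constraint across all $\binom{5}{3}=10$ triples. Each triple restricts the color pattern on its six cycle-edges to a $P_4$-free $3$-coloring of $C_6$, and each such pattern dictates concrete inclusions among the blocks $X_c[x_l]$ and $Y_c[y_l]$ ($c\in\{2,3,4\}$). Cross-comparing these constraints — together with the observation from Step~1 that $x_l$ and $y_l$ lie in different $G_c$-components for every $c\ne 1$ (otherwise $x_ly_l$ would be in color $c$ instead of color $1$) — pins down an $X_c$- or $Y_c$-block in some color $c\in\{2,3,4\}$ that properly contains some $X_1$- or $Y_1$-block, contradicting the antichain property.

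The main obstacle lies in the combinatorial case analysis in the second step: a single $6$-cycle admits many $P_4$-free $3$-colorings and so gives weak information in isolation, and only the interaction among the ten triples forces the antichain violation. I expect the argument to locate a color $c\in\{2,3,4\}$ whose restriction to the ten chosen vertices contains a biclique $[A,C]$ with $|A|+|C|$ large enough that the ambient blocks $X_c[a]$ ($a\in A$) and $Y_c[b]$ ($b\in C$) must extend beyond the chosen representatives, producing the antichain-forbidden containment.
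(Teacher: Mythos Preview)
Your approach diverges sharply from the paper's, and the second step contains a genuine gap rather than a mere omission of detail.

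The paper does not pick representatives from five \emph{different} $G_1$-components. Instead it first disposes of the case where every block has size at most $2$ (Claim~1 directly bounds the width of $G_1$ by $4$), and in the remaining case fixes three vertices $x_1,x_2,x_3$ lying in the \emph{same} $X_1$-block. It then partitions all of $Y$ into ``types'' $Y(c_1,c_2,c_3)$ according to the colors of $yx_1,yx_2,yx_3$, and uses the bi-equivalence, reduced, and antichain properties to whittle down the possible nonempty types until every nonempty type in $Y\setminus Y(1,1,1)$ contains some fixed color $c$; then $B_c[x_1],B_c[x_2],B_c[x_3]$ (plus possibly one more component) cover $X$, so $G_c$ has width at most $4$.

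Your plan instead analyzes the $3$-coloring of $K_{5,5}^-$ induced on ten representatives. Your $P_4$-observation is correct, but the intended conclusion---that some $X_c$- or $Y_c$-block \emph{properly contains} some $X_1$- or $Y_1$-block---cannot be extracted from those ten vertices alone. Knowing that, say, $x_1,x_2\in X_c[x_1]$ tells you $X_c[x_1]\not\subseteq X_1[x_1]$; by the antichain property this only forces $X_1[x_1]\not\subsetneq X_c[x_1]$, i.e.\ there exists some $x'\in X_1[x_1]\setminus X_c[x_1]$. That is consistent, not contradictory. Proper containment of blocks is a \emph{global} statement about every vertex of $X_1[x_i]$, and nothing in your local $10$-vertex picture controls the vertices of $X_1[x_i]$ other than $x_i$ itself. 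Likewise, even if the ten representatives fall into few $G_c$-components, $G_c$ may have many further components on $X\setminus\{x_1,\dots,x_5\}$. The paper's ``type'' analysis avoids this precisely because it classifies \emph{all} of $Y$ relative to three vertices in a common block, so that covering statements about $B_c[x_i]$ become statements about the whole biclique. Your outline gives no mechanism to pass from local incidences among representatives to such global coverage, and I do not see one.
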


\begin{proof} Let $G_i, i=1,2,3,4$, be the bi-equivalence graphs in a reduced antichain partition of a biclique $[X,Y]$.\\

\noindent{\it Claim 1:} if $|X_i[u]|\leq 2$ for every color $i$ and
vertex $u$, then $G_1$ has  $4$ components.\\
To see this let $y\in Y$ and set $U=\bigcup\limits_{i=2}^4 X_i[y]$.
Let $s$ be the number of components of $G_1$ that intersect $U $ at a
single vertex. If  $x\in X_i[y]$, for some $i\in\{2,3,4\}$, and
$B_1[x]\cap U= \{x\} $, then $X_1[x]= \{x\} $ and hence by the
antichain property,  $X_i[y]=\{x\}$ follows. Thus for the number of
components of $G_1$ different from $B_1[y]$ we obtain $s+2(3-s)/2=3,$
and the claim follows.\\

Due to Claim 1  we may assume that  there are three distinct
vertices, $x_1,x_2,x_3 \in X$ in some block of $G_1$. Let
$$Y(c_1,c_2,c_3)= \{y\in Y\mid yx_i\; \hbox{is colored with }\;c_i ,
i=1,2,3\}.$$
The three-tuple $(c_1,c_2,c_3)$ will be called the type of the subset
$Y(c_1,c_2,c_3)$. In terms of this notation $Y(1,1,1)\neq\emptyset$.
When the wildcard character $*$ is used for a color, then the color of
the corresponding edge between $\{x_1,x_2,x_3\}$ and the set of that
type is undetermined (e.g. $Y(3,3,4)\subseteq Y(3,*,4)$ is true).

In a bi-equivalence graph partition certain types cannot coexist as is
expressed in the next claim:\\

\noindent{\it Claim 2:} If $a,b$ are distinct colors, then at least
one of the sets $Y(a,a,*)$ and $Y(a,b,*)$ must be empty.
Indeed, if  $y_1\in Y(a,a,*)$ and $y_2\in Y(a,b,*)$, then
$(y_2,x_1,y_1,x_2)$ is a path belonging to some biclique of  $G_a$,
hence the edge $x_2y_2$ must have color $a$, and not $b$.

Using that $x_1$, $x_2$ are not equivalent we obtain the following\\

\noindent{\it Claim 3:} If $Y(2,2,*)\neq\emptyset$ then $Y(3,4,*)$ and
$Y(4,3,*)$ are not empty.\\

\noindent {\it Claim 4:} $Y(i,i,i)=\emptyset$ for every $i$ in $\{2,3,4\}$.
Assume on the contrary that $Y(2,2,2)\neq\emptyset$. Because $x_1$ and
$x_2$ are not equivalent,
we have $Y(3,4,*)\neq\emptyset$,  $Y(4,3,*)\neq\emptyset$, and
therefore,  $Y(3,3,*)=\emptyset$,  $Y(4,4,*)=\emptyset$. Moreover,
this must hold for any pair $x_i,x_j$, $1\leq i<j\leq 3$, which is
impossible (by the pigeon hole principle).\\

\noindent {\it Claim 5:} At least one of $Y(2,2,3)$ and $Y(2,2,4)$ is
empty. To see this, assume
$Y(2,2,3)\neq\emptyset$ and $Y(2,2,4)\neq\emptyset$.
By the previous claims we have $$Y=Y(1,1,1)\cup Y(2,2,3)\cup
Y(2,2,4)\cup Y(3,4,2)\cup Y(4,3,2),$$
where none of these types are empty.
In particular $Y(*,*,3)\cup Y(*,*,4)\subseteq Y(2,*,*)$, violating the
antichain property.\\

Now w.l.o.g. assume that either $Y(2,2,3)\neq\emptyset$ or in any
(nonempty) type $Y(a,b,c)$ the elements $a$, $b$, and $c$ are
distinct, apart $Y(1,1,1)$. In both cases every (nonempty) type in
$Y\setminus Y(1,1,1)$ has a color $3$. Then the components $B_3[x_i],
i=1,2,3,$ form a cover provided $Y_3[z]\cap (Y\setminus
Y(1,1,1))\neq\emptyset$, for all $z\in X$. If some $z$ does not
satisfy this, then by the antichain property, $Y(1,1,1)=Y_3[z]$, and
$B_3[x_i]$, $i=1,2,3$, and $B_3[z]$ together form a cover.
\end{proof}

\section{Bi-equivalence partitions for $r=5$.}\label{5}
In this section we shall verify Conjecture \ref{anticonj}, for
$r=5$, in a stronger form. Actually we will show that under the
appropriate conditions there is a cover with at most $2r-2=8$
monochromatic
 components in the same color,
or equivalently, one of the bi-equivalence graphs of the partition has width at most $8$.

\begin{theorem}
\label{r=5} If a biclique has an antichain partition into five
bi-equivalence graphs, then the width of some color class is at most $8$.
\end{theorem}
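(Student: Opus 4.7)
The plan mirrors the proof of Theorem \ref{r=4}, now with one more color class and the weaker target width $8$. Work in a reduced antichain partition $G_1,\dots,G_5$ of the biclique $[X,Y]$. First I would prove the $r=5$ analog of Claim 1 of that proof: if every block $X_i[u]$ has size at most two, then the width of $G_1$ is at most $5\le 8$. Fixing $y\in Y$, the set $U=X\setminus X_1[y]=\bigcup_{i=2}^{5}X_i[y]$ has size $8-s_0$, where $s_0$ counts the singleton blocks among $X_2[y],\dots,X_5[y]$. The antichain argument of Claim 1 shows that each singleton $G_1$-component inside $U$ forces a singleton $X_i[y]$, so with $s$ singleton and $k$ size-two $G_1$-components in $U$ one has $s+2k=8-s_0$ and $s\le s_0$, yielding $s+k\le 4$; adding $B_1[y]$ gives the desired bound.

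We may therefore assume that some $G_1$-block contains three vertices $x_1,x_2,x_3$. Classify each $y\in Y$ by its type $(c_1,c_2,c_3)$, where $c_j$ is the color of $yx_j$; for $y\notin Y_1[x_1]$ each $c_j\in\{2,3,4,5\}$. The bi-equivalence and antichain analogs of Claims 2--5 from Theorem \ref{r=4} carry over essentially verbatim: for distinct $a,b\in\{2,3,4,5\}$, at most one of $Y(a,a,*)$ and $Y(a,b,*)$ is nonempty; non-equivalence of $x_1,x_2,x_3$ excludes each monochromatic type $(i,i,i)$; and the antichain property forces at most one of $Y(2,2,3),Y(2,2,4),Y(2,2,5)$ to be nonempty (with symmetric statements for any fixed pair of equal first coordinates).

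The concluding case analysis aims to produce a cover by at most $8$ $G_c$-components for some color $c$. The plan is to show that some $c\in\{2,3,4,5\}$ appears in every nonempty type of $Y\setminus Y_1[x_1]$, so that $B_c[x_1],B_c[x_2],B_c[x_3]$ cover $Y\setminus Y_1[x_1]$; the remaining $G_c$-components needed to cover $Y_1[x_1]$ together with $X\setminus(X_c[x_1]\cup X_c[x_2]\cup X_c[x_3])$ are then bounded by $5$ using the antichain property on $Y$-blocks. The main obstacle is that with $4^3=64$ possible types (versus $27$ for $r=4$) and an additional color to control, the case bookkeeping becomes substantially more intricate; several sub-cases will likely require selecting a fourth vertex $x_4\in X_1[x_1]$ to sharpen the type restrictions, or re-invoking the baseline Claim 1 argument on the sub-biclique induced by a surviving type.
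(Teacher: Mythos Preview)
Your proposal has a genuine gap: the three-vertex framework inherited from Theorem~\ref{r=4} is too weak for $r=5$, and the missing piece is precisely what the paper supplies through its two preliminary lemmas.

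Your analog of Claim~1 is correct and gives a block of size at least $3$, but only $3$. With three positions and four colors $\{2,3,4,5\}$, your key step---finding a color $c$ that appears in every nonempty type of $Y\setminus Y_1[x_1]$---can fail. The four types $(2,3,4)$, $(3,4,5)$, $(4,5,2)$, $(5,2,3)$ are pairwise compatible with the Type rule (no type repeats a color, so the rule is vacuous), satisfy the non-equivalence of $x_1,x_2,x_3$, and respect the antichain condition (the induced $Y$-blocks are the five disjoint sets $Y(1,1,1)$ and $Y(2,3,4),\dots,Y(5,2,3)$). Yet each color $2,3,4,5$ is missing from exactly one of them. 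Your fallback of ``selecting a fourth vertex $x_4\in X_1[x_1]$'' is not available: nothing you have proved guarantees $|X_1[x_1]|\ge 4$. Likewise, the vague endgame of covering the remainder with ``at most $5$'' further $G_c$-components has no argument behind it.

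The paper's proof takes a substantially different route. Instead of a size-$3$ block, it first establishes (Lemma~\ref{singleton}) that there are at most two singletons per side, and then proves a nontrivial counting lemma (Lemma~\ref{optimal}) via an optimization over block-size vectors to show that if every color has width at least $9$, some block has size at least $9$. Picking $x_1,\dots,x_9$ in that block, the types become $9$-tuples over $\{2,3,4,5\}$; by pigeonhole every nonempty type repeats some color at least three times, and the proof splits into three cases according to the maximum repeat count (at least $5$; exactly $4$; exactly $3$). The Type rule together with two ``Distinguishing rules'' (derived from non-equivalence) then force, in each case, a color $c$ whose components $B_c[x_i]$ (at most seven of them, plus possibly one ``starring'' component for $Y(1,\dots,1)$) cover everything. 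The essential idea you are missing is the counting argument that produces a block of size $9$; without it the type analysis does not get off the ground.
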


Let $G_i, i=1,2,3,4,5$, be the bi-equivalence graphs in a reduced antichain partition of the
biclique $[X,Y]$.  For the proof we need two technical lemmas.

\begin{lemma}
\label{singleton} If each $G_i, i=1,\dots,5,$ has width at
least $6$, then $[X,Y]$ contains at most two singletons in each vertex class.
\end{lemma}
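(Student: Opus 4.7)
The plan is to assume for contradiction that $X$ contains three singletons $v_1,v_2,v_3$ while every color class has width at least $6$, and to extract a structural impossibility. The first observation is that for every $y \in Y$ the three colors of the edges $v_1y,v_2y,v_3y$ are pairwise distinct: if $v_ky$ and $v_ly$ both had color $i$ then $v_k,v_l \in X_i[y]$, but $X_i[v_k]=\{v_k\}$ and $X_i[v_l]=\{v_l\}$ force $v_k=v_l$. I will write $t(y)=(c_1(y),c_2(y),c_3(y))$ for this triple and $M(y):=\{1,\dots,5\}\setminus\{c_1(y),c_2(y),c_3(y)\}$ for the pair of colors missing at $y$.

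The crucial structural step is the following: for every $u \in X' := X\setminus\{v_1,v_2,v_3\}$ and every $y \in Y$ the color of $uy$ lies in $M(y)$. Indeed, if $uy$ had color $c_k(y)$ then $u$ would belong to $X_{c_k(y)}[y]=X_{c_k(y)}[v_k]=\{v_k\}$, contradicting $u \in X'$. Consequently every $y$ sees only two colors on its edges into $X'$, and for $M(y)=\{d,e\}$ the set $X'$ splits as the disjoint union $X_d[y]\sqcup X_e[y]$. In parallel, the reduced antichain hypothesis forces $|X_i[u]|\geq 2$ for every $u\in X'$ and every color $i$: a singleton block $\{u\}$ would, by antichain, propagate to every color and produce a fourth singleton, contradicting the choice of $v_1,v_2,v_3$. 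Hence each of the five partitions of $X'$ induced by the colors has all blocks of size $\geq 2$, and by reducedness their common refinement is discrete.

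Combining these ingredients, each color $i$ contributes the three stars $B_i[v_k]$ together with biclique components whose $X$-parts partition $X'$ and whose $Y$-parts partition $Y_{\neq i}:=\{y\in Y : i\in M(y)\}$; by width $\geq 6$, there are at least three such non-singleton components per color. The final step I plan is to exploit the 2-color restriction: for every $y$ with $M(y)=\{d,e\}$ the complementary sets $X_d[y]$ and $X_e[y]$ jointly determine the position of $y$ in the color-$d$ and color-$e$ partitions of $Y$, which couples these two color classes and forces vertices with equal triples $t(y)$ either to be distinguished by genuinely different bipartitions of $X'$ or to fail reducedness. Counting admissible bipartitions against the $\binom{5}{2}=10$ possible values of $M(y)$, together with the antichain condition forbidding cross-color containment of blocks, should produce a color $i$ for which $Y_{\neq i}$ collapses into at most two equivalence classes in that color, giving width $\leq 5$ and the desired contradiction.

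The main obstacle is this last combinatorial step. The local constraints --- distinctness of the three colors at each $y$, the 2-color rule on $X'\times Y$, reducedness, antichain, and widths $\geq 6$ --- are individually clean, but packaging them into a contradiction requires a careful accounting over the $10$ missing-pair types and the $3!=6$ orderings $t(y)$ per fixed $M(y)$. I expect the cleanest route is to fix the color $i$ minimizing the number of non-singleton components and to propagate the coupling from the 2-color rule across the pairs $\{i,j\}$, $j\neq i$, until the number of distinct color-$i$ blocks of $X'$ is driven below three.
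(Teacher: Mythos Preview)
Your setup is correct and in fact coincides with the paper's opening moves: the three colors $c_1(y),c_2(y),c_3(y)$ are pairwise distinct, every $u\in X'$ sends to $y$ an edge of color in $M(y)$, and hence $X'=X_d[y]\sqcup X_e[y]$ whenever $M(y)=\{d,e\}$. But from this point you launch a global analysis over all $y$ and all ten missing-pair types, and you explicitly leave that step unfinished (``the main obstacle is this last combinatorial step''). That is a genuine gap: the counting you sketch is not carried out, and it is not clear it can be made to work as stated.

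The paper finishes in one stroke from exactly the point you reached, using the antichain property \emph{locally} rather than globally. Fix a single $y$ --- any $y\in Y_1[v_1]$ will do --- and relabel colors so that $M(y)=\{4,5\}$; thus $X'=X_4[y]\sqcup X_5[y]$. Pick any $z\in X_4[y]$. The block $X_5[z]$ avoids $v_1,v_2,v_3$ (they are singletons in color $5$) and avoids $X_5[y]$ (since $z\notin X_5[y]$), so $X_5[z]\subseteq X_4[y]$. By the antichain property this cannot be a proper containment, hence $X_5[z]=X_4[y]$. Consequently the color-$5$ partition of $X$ is exactly $\{v_1\},\{v_2\},\{v_3\},X_4[y],X_5[y]$, so $G_5$ has width $5<6$, the desired contradiction.

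So the missing idea is not a delicate accounting over ten types; it is a single application of antichain to the inclusion $X_5[z]\subseteq X_4[y]$ for one well-chosen $y$. You invoked antichain only to rule out further singletons in $X'$, but the decisive use is to force the two halves of $X'$ to be entire blocks in each other's color.
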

\begin{proof}
Suppose on the contrary that one class has three singletons, say $x_1, x_2, x_3\in X$ with
$|X_i[x_j]|=1$, for every $1\leq i\leq 5, $ and $1\leq j\leq 3$.
Then taking any $y\in Y_1[x_1]$, we may assume that $yx_2\in E(G_2)$ and $yx_3\in E(G_3)$.
In particular, we obtain that $X=\{x_1, x_2, x_3\}\cup X_4[y]\cup X_5[y]$.

For any $z\in X_4[y]$, we have $X_5[z]\cap X_5[y]=\emptyset$, hence by the antichain property,
$X_5[z]=X_4[y]$. Therefore $G_5$ has five components: $B_5[x_1]$, $B_5[x_2]$, $B_5[x_3]$, $B_5[z]$,
$B_5[y]$, a contradiction.
\end{proof}

\begin{lemma}
\label{optimal} Let each $G_i, i=1,\dots,5,$ have width at
least $9$. If $[X,Y]$ contains at most two singletons in both of its vertex classes, then
there is a color $i$ and a vertex $u$ for which $|X_i[u]|\ge 9$ or
$|Y_i[u]|\ge 9$.
\end{lemma}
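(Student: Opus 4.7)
}
I would argue by contradiction, assuming that $|X_i[u]|\le 8$ and $|Y_i[u]|\le 8$ for every color $i$ and every vertex $u$, and try to contradict either the width hypothesis or the singleton hypothesis. First I would record the size bounds. Fixing any vertex $y_0\in Y$, the blocks $X_1[y_0],\dots,X_5[y_0]$ partition $X$ into five parts of size at most $8$, so $|X|\le 40$. Conversely, width at least $9$ in color $i$ produces at least $9$ $X$-blocks of $G_i$; by hypothesis at most $2$ of them are singletons, so at least $7$ non-singleton $X$-blocks remain, each of size at least $2$ and at most $8$, giving $|X|\ge 2\cdot 1+7\cdot 2=16$. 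Symmetrically $16\le |Y|\le 40$. These bounds already force each partition of $X$ (or $Y$) into $X_i[y_0]$'s to contain at least one block of size at least $\lceil 16/5\rceil=4$.

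The structural engine of the proof would be the following observation, used repeatedly. Fix a non-singleton $y_0\in Y$ (there are at least $|Y|-2\ge 14$ of them) and set $A_i=X_i[y_0]$. For distinct vertices $x,x'\in A_i$, the reducedness assumption yields a color $j$ with $X_j[x]\neq X_j[x']$, and since $X_i[x]=X_i[x']=A_i$ we must have $j\ne i$. Furthermore, as $y_0\notin Y_j[x]$ for $j\ne i$ (otherwise the edge $xy_0$ would have two colors), every block $X_j[x]$ with $x\in A_i$ and $j\ne i$ is disjoint from $A_j$, and by the antichain property two such blocks are either equal or disjoint; hence, for each $j\ne i$, color $j$ induces a genuine partition of $A_i$ into blocks of size at most $8$, and these four partitions (for $j\ne i$) together separate the points of $A_i$. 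Dually the same holds for $Y$.

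From this point the plan is a pigeonhole / case analysis on the block sizes $|A_1|,\dots,|A_5|$, coupled with their duals $|B_i|=|Y_i[x_0]|$ for a corresponding $x_0\in X$. Since each $|A_i|\le 8$ with $\sum|A_i|\ge 16$, several $|A_i|$'s must be at least $2$, and the internal partitions from the previous paragraph, together with the global width-$\ge 9$ requirement applied to each color, constrain how blocks of $G_j$ outside $A_j$ can fit within $X\setminus A_j$. The contradiction is produced by showing that whatever size distribution one chooses, either some block must exceed $8$, or a third singleton is forced in $X$ or $Y$, or some color class ends up with fewer than $9$ components. I expect the main obstacle to be the bookkeeping in this final case split: one must simultaneously track the five $X$-partitions and their $Y$-counterparts, using the antichain property to rule out proper containments and the reducedness to rule out duplicates. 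A clean version likely proceeds by assuming the largest $|A_i|$ equals $8$ (the extremal case) and then funnelling the remaining $\ge 8$ vertices into at most $32$ positions across the other four $A_j$'s, using the refinement structure to force either an oversized dual block $|Y_j[u]|\ge 9$ or a violation of the width bound.
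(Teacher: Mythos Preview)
Your opening bounds $16\le |X|,|Y|\le 40$ are correct and coincide with the paper's starting point, and your ``structural engine'' observations about the induced partitions of $A_i=X_i[y_0]$ are valid. But the proof stops there: the third paragraph is not an argument but a hope that some unspecified case analysis on $(|A_1|,\dots,|A_5|)$ will produce a contradiction. You never identify a quantity that is forced out of range, and sentences like ``I expect the main obstacle to be the bookkeeping'' and ``a clean version likely proceeds by\dots'' signal that the mechanism is missing. Concretely, nothing in your refinement/separation observation bounds the \emph{number} of components of a color below $9$, nor does it force a block above $8$; all it gives is that the vertices of $A_i$ are separated by four further partitions of $A_i$, which is far too weak (any set of size $\le 8$ is separated by three partitions into parts of size $\le 2$). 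So the plan, as written, has a genuine gap at the decisive step.

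The paper's proof is entirely different and avoids any structural case split. It is a global edge-counting argument: let $G_1$ be a color class with the most edges, so that $|E(G)|\le 5|E(G_1)|$. Writing $\underline{x},\underline{y}$ for the vectors of block sizes of $G_1$, one has $|E(G_1)|=\underline{x}\cdot\underline{y}$ and $|E(G)|=(\underline{x}\cdot\underline{1})(\underline{y}\cdot\underline{1})$. The paper then minimizes $\mathrm{diff}(\underline{x},\underline{y})=(\underline{x}\cdot\underline{1})(\underline{y}\cdot\underline{1})-5(\underline{x}\cdot\underline{y})$ over all vectors of length $\ge 9$ with entries in $\{1,\dots,t\}$ and at most two $1$'s, via a short sequence of standardization moves (collapse to length $9$, sort, push entries to the extremes, symmetrize), and computes that for $t=8$ the minimum of $\mathrm{diff}$ is strictly positive. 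That contradicts $|E(G)|\le 5|E(G_1)|$, so some block must have size at least $9$. The key idea you are missing is this passage from block sizes to an edge-count inequality that can be optimized in closed form; your partition/antichain observations, while correct, do not substitute for it.
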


\begin{proof}
Assume that for every color $i$ and vertex
$u$ we have $|X_i[u]|\le t$ and $|Y_i[u]|\le t$. Let $G_1$ be the graph with the maximum number of
edges among $G_i,i=1,\dots,5$. The trivial inequality
$|E(G)|\le 5 |E(G_1)|$ will give us a first lower bound on $t$.

For a vertex $u\in X$ we have $Y=Y_1[u]\cup Y_2[u]\cup Y_3[u]\cup Y_4[u]\cup Y_5[u]$.
As $|Y_i[u]|\le t$ we get $|Y|\le 5t$. Similarly it follows that $|X|\le 5t$. Since $G$
contains at most two singletons, and the width of $G_1$ is at least $9$ we have
$5t\ge |Y|\ge 2\cdot 1+7\cdot2=16$, therefore $t\ge 4$.\\

Let $\x$ and $\y$ be vectors which contain the sizes of the components of $G_1$ in $X$ and in $Y$,
respectively. Our assumptions on $G_1$ mean that the length of $\x$ and $\y$ is at least $9$, they
have at most two elements equal to $1$, and all their elements are at most $t$.
Using this notation $|E(G_1)|=\x\cdot\y$, and $|E(G)|=|X||Y|=(\x\cdot\e)(\y\cdot\e)$, where $\e$ is
the constant $1$ vector with appropriate length.
We are going to investigate $\diff(\x,\y)=|E(G)|-5|E(G_1)|=(\x\cdot\e)(\y\cdot\e)-5(\x\cdot\y)$, and
determine its minimum over all possible values of $\x$ and $\y$. If this function is positive for some $t$,
then there is no partition of $G$ into graphs with the above conditions for the given value of $t$.\\

 In the first steps we minimize $\diff(\x,\y)$, for any fixed $|X|$ and $|Y|$, that is we
maximize $|E(G_1)|=\x\cdot\y$.\\

\noindent{\it Step 1:} We may assume that the length of $\x$ is equal to $9$, and so the length of $y$ is also $9$.
Otherwise we could join two components of $G_1$ and increase the number of edges.
So we have $\x=(x_1,\dots,x_9)$ and $\y=(y_1,\dots, y_9)$.\\

\noindent{\it Step 2:} We can reorder the components of $G_1$ such that $\y$ is ordered non-increasingly.
After that we may assume that the elements of $\x$ are also ordered non-increasingly.
Otherwise we could swap two elements with $x_i<x_j$ for $1\le i<j\le 9$ and this operation would not
decrease the value of $\x\cdot\y$. (The increment is $(x_j-x_i)(y_i-y_j)\ge 0$.)
Hence $y_1\ge y_2\ge \dots \ge y_9$ and $x_1\ge x_2\ge\dots \ge x_9$.\\

\noindent{\it Step 3:}
For $j>i$, the operation of increasing
$x_i$  and decreasing $x_j$  by the same constant $c$
 increases $|E(G_1)|=\x\cdot\y$ with $c(y_i-y_j)\ge 0$.\\
By repeated use of this operation (observing the condition that each
element of $\x$ and $\y$ is at most $t$, and these vectors contain
at most two elements equal to $1$) we obtain that $x_1=\dots
=x_p=t$, $t>x_{p+1}\ge 2$, $x_{p+2}=\dots =x_7=2$, $x_8=x_9=1$ and
similarly $y_1=\dots =y_q=t$, $t>y_{q+1}\ge 2$, $y_{q+2}=\dots
=y_7=2$, $y_8=y_9=1$. From $|X|\le 5t$
it follows that $p<5$, and similarly we get $q<5$.\\

Thus for a given $|X|$ and $|Y|$, the maximum value $|E(G_1)|=\x\cdot\y$ is determined by
the vectors $\x, \y$ standardized as above. In the next steps we minimize $\diff(\x,\y)$ by
changing $|X|$ and $|Y|$.\\

\noindent{\it Step 4:} If $x_{p+1}\neq 2$ then let $\x^{-}$ and $\x^{+}$ be
vectors almost the same as $\x$, but at the $(p+1)$-th position
they have $x_{p+1}-1\ge 2$ and $x_{p+1}+1\le t$, respectively. We
claim that $\diff(\x^{-},\y)$ or $\diff(\x^{+},\y)$ is not greater
than $\diff(\x,\y)$. Indeed,
$\diff(\x,\y)-\diff(\x^{-},\y)=\diff(\x^{+},\y)-\diff(\x,\y)=|Y|-5y_{p+1}$
which means that $\diff(\x,\y)$ is a middle element of an
arithmetic progression between $\diff(\x^{-},\y)$ and
$\diff(\x^{+},\y)$. Thus we may assume that $x_{p+1}=2$ and
similarly $y_{q+1}=2$. Furthermore we assume that $q=p+r$, where
$r\ge 0$.\\

\noindent{\it Step 5:} Now we can express $\diff(\x,\y)$ as a function of $p$ and $r$ in the following way:
$$
\begin{array}{lll}
\diff(\x,\y)&=&(\x\cdot\e)(\y\cdot\e)-5(\x\cdot\y) \\
&=& (tp+2(7-p)+2)(t(p+r)+2(7-p-r)+2)\\
&&-5(t^2p+2tr+4(7-p-r)+2),
\end{array}
$$
where the coefficient of $r$ is $p(t-2)^2+6(t-2)> 0$, as $t\ge 4$. Therefore $\diff(\x,\y)$ is minimal
 if $r=0$, that is $p=q$, and so $\x=\y$.
In this case $\diff(\x,\x)=p^2(t^2-4t+4)+p(-5t^2+32t-44)+106$, which has extremum if
$\frac{\mathrm{d}}{\mathrm{d}p} \diff(\x,\x) = 0$ which gives $p=\frac{5t^2-32t+44}{2(t^2-4t+4)}$.
(This extremum is a minimum since
$\frac{\mathrm{d^2}}{\mathrm{d}p^2} \diff(\x,\x) = 2(t^2-4t+4)=2(t-2)^2>0$, because $t\ge 4$.)

\medskip

From the above formula we get $p=1.5$, for $t=8$,  which gives that the minimum value of
$\diff(\x,\y)$ for any $\x$, $\y$ is at least $25>0$. (Actually the minimum is $34$ which is taken
on the integer values $p=1$ and $p=2$.) Thus $|E(G)|\le 5|E(G_1)|$ cannot hold for $t=8$, which
completes the proof. \end{proof}

\noindent {\it Proof of Theorem \ref{r=5}.}
Applying Lemmas \ref{singleton} and \ref{optimal},  it follows that there is a block containing
at least nine distinct  vertices, say $x_i\in X_1[x_1]$, for every $i=1,2,\dots,9$.
Similarly to the proof of Theorem \ref{r=4}, for a sequence of given colors $c_1,\dots, c_9$, let
$$Y(c_1,\dots,c_9)= \{y\in Y\mid yx_i\; \hbox{is colored with}\;c_i , i=1,\dots,9\}.$$
The nine-tuple $(c_1,\dots,c_9)$ will be called the type of the
subset $Y(c_1,\dots, c_9)\subseteq Y$. In terms of this notation
Lemmas \ref{singleton} and \ref{optimal} imply that
$Y(1,\dots,1)\neq\emptyset$. Again, when the wildcard character $*$ is
used for the $i$-th color position in a type, then the color of
the corresponding edges to $x_i$ are undetermined.

In a bi-equivalence graph partition certain types cannot coexist as is expressed in the next rule. \\

\noindent{\bf Type rule.}  If $a,b$ are distinct colors, then at least one of the sets
$Y(a,a,*,\dots,*)$ and $Y(a,b,*,\dots,*)$ must be empty.

Indeed, if  $y_1\in Y(a,a,*,\dots,*)$ and $y_2\in Y(a,b,*,\dots,*)$, then $(y_2,x_1,y_1,x_2)$ is a
path belonging to $G_a$, hence
the edge $x_2y_2$ must have color $a$, and not $b$.\\

Notice that the Type rule remains valid when permuting colors
and/or when relabelling the vertices $x_1,x_2,\dots,x_9$, that is
when the colors in the types are moved to different positions.
Thus, for instance, types $(*,5,*,\dots,*,3)$ and
$(*,3,*,\dots,*,3)$ cannot coexist.

We will need a simple corollary of the antichain property as follows:\\

\noindent{\bf Starring rule.} If $Y_c[w]\subseteq Y(c_1,\dots,c_9)$, for some  $w\in X,$ then
equality must hold because $ Y(c_1,\dots,c_9)\subseteq Y(c_1,*,\dots,*)=Y_{c_1}[x_1]$, in that case
we say that $w$ ``stars" the set $Y(c_1,\dots,c_9)$ in color $c$.
\\

In the sequel when we write ``w.l.o.g. we assume", we mean: ``by appropriately permuting the colors and
relabelling $x_1,x_2,\dots, x_9$ we may assume".\\

We shall proceed with investigating the partition of  $Y^\prime=Y\setminus Y(1,\dots,1)$ into
different types. Note that if
$Y(c_1,\dots,c_9)\subseteq Y^\prime$, then we have $c_i\neq 1$, for every $i=1,\dots,9$.\\

\noindent{\bf Distinguishing rule 1.} If $Y(2,2,*,\dots,*)\neq\emptyset$ and
$Y(3,3,*,\dots,*)\neq\emptyset$, then
$$Y(4,4,*,\dots,*)\cup Y(5,5,*,\dots,*)=\emptyset,$$
 furthermore, $$Y(4,5,*,\dots,*)\neq\emptyset,\quad  Y(5,4,*,\dots,*)\neq\emptyset.$$

To see this recall that no equivalent vertices exist in the coloring, in particular
$x_1,x_2$ must be distinguished by the components in colors $4$ and $5$. If
$ Y(4,4,*,\dots,*)\neq\emptyset,$ then by the Type rule, $B_i[x_1]= B_i[x_2]$ for every $i=1,2,3,4$,
implying $B_5[x_1]= B_5[x_2]$, hence $x_1,x_2$ would be equivalent.
An immediate corollary of Distinguishing rule 1 is stated for convenience as follows.\\

\noindent{\bf Distinguishing rule 2.} At least one of $Y(2,2,2,*,\dots,*)$ and $Y(3,3,3,*,\dots,*)$
must be empty.\\

Returning to the proof let $Y(c_1,\dots,c_9)\subseteq Y^\prime$. Since $c_i\in\{2,3,4,5\}$,
some color must repeat at least three times. We shall consider the following three cases:

1) there is a (nonempty) type in $Y^\prime$ such that a color repeats more than four times;

2) no (nonempty) type in $Y^\prime$ repeats a color more than four times, and there is a (nonempty) type repeating a color four times;

3) no (nonempty) type in $Y^\prime$ repeats a color more than three times.\\

\noindent {\it Case 1:} there is a (nonempty) type in $Y^\prime$ such that a color repeats more
than four times, say $Y(2,2,2,2,2,*,\dots,*)\neq\emptyset$.

Observe that color $2$ cannot repeat seven times. Indeed, in every (nonempty) type in $Y^\prime$
different from  $(2,2,2,2,2,2,2,*,*)$  color $2$ is not used on the first  seven positions, by the
Type rule. Hence one color among $3,4,$ and $5$ must repeat at least three times contradicting
Distinguishing rule 2. Thus we may assume that $Y(2,2,2,2,2,*,c_7,*,*)\neq\emptyset$, where
$c_7\neq 2$.

A similar pigeon hole argument shows that  in every (nonempty) type in $Y^\prime$ different from
$(2,2,2,2,2,*,*,*)$  each of the three colors $3,4,5$ must be used on the first  five positions,
otherwise Distinguishing rule 2 is violated. Thus w.l.o.g. we assume that $c_7=3$.

Observe that by the Type rule,
$Y_3[x_7]\subseteq Y(2,2,2,2,2,*,\dots,*)$, thus by the Starring rule, $Y_3[x_7]=Y(2,2,2,2,2,*,\dots,*)$
follows.
Then we obtain that
$$Y^\prime=\left(\cup\{Y_3[x_i]\mid 1\leq i\leq 5\}\right)\cup Y_3[x_7].$$

If  the six connected components $B_3[x_i], 1\leq i\leq 5$ and $B_3[x_7]$ do not cover $X$,
then there is an uncovered vertex $w\in X$ which stars $Y(1,\dots,1)$ in color $3$, by the
Starring rule. In this case
$B_3[x_i], 1\leq i\leq 5$, $B_3[x_7]$, and $B_3[w]$ cover $Y$ (thus the whole vertex set of $G$).

Consequently, in either case $G_3$ has width at most $7$.\\

\noindent {\it Case 2:} no (nonempty) type in $Y^\prime$ repeats a color more than four times, and
there is a (nonempty) type repeating a color four times, say $Y(2,2,2,2,c_5,\dots,c_9)\neq\emptyset$,
where $c_5,\dots,c_9\neq 2.$ We also know that among the five colors, $c_5,\dots, c_9$, there are
two distinct colors, w.l.o.g. we assume that $c_5=3$ and $c_6=4$.

Assume now that  in every (nonempty) type in $Y^\prime$ different from  $(2,2,2,2,*,\dots,*)$
color $3$ is used somewhere on the first  four positions. Then a similar argument that we used in
Case 1 shows that the width of  $G_{3}$  is at most $6$. By the same reason repeated for color $4$,
it remains to consider the situation when,  for each color $3$ and $4$, there is a (nonempty) type in
$Y^\prime$ different from  $(2,2,2,2,*,\dots,*)$ missing $3$ and $4$  on the first four positions,
respectively.

Since a color cannot repeat three times on the first four positions, we have that \\
$Y(4,4,5,5,*,\dots,*)\neq\emptyset$, moreover
$Y(a,b,c,d,*,\dots,*)\neq\emptyset$, where among $a,b,c,d$ both
colors $3$ and $5$ repeat twice. By the Type rule, either
$a=b=5, c=d=3$ or $c=d=5, a=b=3$. In each case Distinguishing rule 1 is violated.\\

\noindent {\it Case 3:} no (nonempty) type in $Y^\prime$ repeats a color more than three times.

Then by the pigeon hole principle, each (nonempty) type in $Y^\prime$ has a color repeated three times.
Furthermore, if a type uses just three colors, then each of its three colors is repeated exactly
three times.

Let $Y(c,c,c,*,\dots,*)\neq\emptyset$, for some $c=2,3,4,$ or $5$. If each (nonempty) type uses color $c$ at
some position, then either the connected components $B_c[x_i], 3\leq i\leq 9$ cover $X$, or some
$w\in X$ stars $Y(1,\dots,1)$ in color $c$,  hence $B_c[x_i], 3\leq i\leq 9$ and $B_c[w]$ cover $Y$
(thus the whole vertex set of $G$). In each situation $G_c$ has width at most $8$. We claim that this
must happen for some $c$.

Assume that color $2$ repeats three times in some (nonempty) type, and some other (nonempty) type misses color $2$.
W.l.o.g. let $T_2=(3,3,3,4,4,4,5,5,5)$ be a (nonempty) type. By repeating the same idea, we see that,
for every $c=3,4,5$, some (nonempty) type $T_c$ misses $c$.

Thus $T_3$ has three triplets in colors $2,4,5$ at some positions.   By Distinguishing rule 2 and the Type rule the last three positions of
$T_3$ cannot be $5,5,5$. W.l.o.g. assume that
$T_3=(5,5,*,5,*,\dots,*)$. Then again, by Distinguishing rule 2 and the Type rule, it follows that
$T_3=(5,5,4,5,2,2,4,4,2)$.

Finally, for the possible positions of the three 5's of $T_4$ with respect to $T_2$ and $T_3$, we
conclude as before that $T_4=(*,*,5,*,5,5,*,*,*)$.
 This contradicts Distinguishing rule 1 on positions $5$ and $6$ and completes the proof of Theorem \ref{r=5}. \qed

\section{The dual form, transversals of $r$-partite intersecting
hypergraphs.}\label{dual}

Conjectures \ref{Rysconn} and \ref{bipconn}  can be translated
into dual forms as conjectures about transversals of $r$-partite
$r$-uniform intersecting hypergraphs. To do that, one should
consider the $r$ partitions defined by the monochromatic connected
components of an $r$-colored complete or complete bipartite graph as
hyperedges over the vertex set and consider the dual of this
hypergraph. This approach already turned out to be very useful, for
example results of F\"uredi established in \cite{FU1}
can be applied. A survey  on the
subject is \cite{GYSUR}.

An $r$-uniform hypergraph $H$ is defined by a finite set $V(H)$
called the vertex set of $H$, and by a set $E(H)$ of r-sets of
$V(H)$ called edges of $H$. An $r$-uniform hypergraph $H$ is called
$r$-partite if  there is a partition $V(H)=V_1\cup\dots\cup V_r$
such that $|e\cap V_i|=1$, for all $i=1,\dots, r$ and $e\in E(H)$.
 A hypergraph $H$ is called {\it intersecting} if $e\cap f\neq \emptyset$ for any $e,f\in E(H)$.
 A set $T\subseteq V(H)$ is called a transversal of $H$ provided $e\cap T\neq\emptyset$,
 for all $e\in E(H)$; the minimum cardinality of a transversal of $H$ is the transversal number
 of $H$ denoted by $\tau(H)$.\\

  The dual
of Conjecture \ref{Rysconn} is Ryser's conjecture for intersecting hypergraphs in its usual form as follows:

\begin{conjecture}\label{Ryshyp} If $\cal{H}$ is an intersecting $r$-partite
hypergraph then $\tau({{\cal{H}}})\le r-1$. \end{conjecture}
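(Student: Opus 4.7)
My plan is to attack Conjecture \ref{Ryshyp} through its dual, the monochromatic component covering formulation of Conjecture \ref{Rysconn}: showing that in every $r$-coloring of $E(K_n)$ the vertex set is covered by at most $r-1$ monochromatic connected components. The duality, which is carried out explicitly in Section \ref{dual} of this paper, means a proof in either language transfers to the other, and the coloring language is more amenable to inductive and structural arguments.

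I would proceed by induction on $r$. The base case $r=2$ follows from the classical fact that for any $2$-coloring of $K_n$ some monochromatic subgraph is spanning and connected, giving $\tau \le 1$. For the inductive step, fix any vertex $v$ and let $C_1,\dots,C_r$ be the monochromatic components through $v$, one per color; these trivially cover $V(K_n)$, yielding the weak bound $\tau\le r$. The task is to save a single component. A reasonable attempt is: find a second vertex $u$ and a color $i$ such that $C_i$ together with $C_i'=\{\text{monochromatic components through }u\text{ in the other }r-1\text{ colors}\}$ still covers $V(K_n)$ after deletion of one $C_j$. Equivalently, look for an edge $uv$ in color $i$ whose $i$-component is ``large enough'' that the remaining $(r-1)$-colored graph on $V\setminus V(C_i)$ is governed by fewer colors, allowing an inductive step on a smaller coloring. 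A useful preliminary reduction is to assume the coloring is reduced (no two vertices have identical incident color pattern) and that every color class is spanning, in analogy with the bipartite reductions of Section \ref{equi}; such pruning preserves minimal counterexamples and enforces rigidity.

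The main obstacle is precisely why this conjecture has resisted proof for $r\ge 6$ since 1970: the naive inductive step loses control. Deleting a color class can destroy the intersecting property, and replacing the cover at $v$ by a cover built at a different vertex $u$ has no reason a priori to decrease the count. The successful proofs for $r=3$ (Aharoni), $r=4$, and $r=5$ (Tuza; Haxell--Narins--Szab\'o) all rely on genuinely $r$-specific methods, including topological arguments and matching/fractional matching techniques, rather than a uniform inductive scheme. My realistic expectation is that a purely combinatorial induction in the coloring model is insufficient; the most promising route is to combine the antichain/spanning reductions developed in Section \ref{equi} (adapted from the bipartite to the full setting) with the topological connectivity machinery used by Aharoni and Haxell, hoping to isolate a minimal counterexample rigid enough to contradict known extremal properties of intersecting $r$-partite hypergraphs such as those of F\"uredi \cite{FU1}. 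A full proof for general $r$ is almost certainly beyond a short outline; what I would commit to is a reduction of the general conjecture to a sharper structural statement about intersecting $r$-partite hypergraphs achieving $\tau=r-1$, which is where the present open problem truly lives.
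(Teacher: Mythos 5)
There is no proof to compare against: the statement you were asked to prove is Conjecture~\ref{Ryshyp}, Ryser's conjecture for intersecting $r$-partite hypergraphs, which the paper states as an \emph{open problem} (settled only for $r\le 5$, open for $r\ge 6$). The paper offers no proof, and your proposal does not supply one either. The concrete gap is exactly the one you name yourself: starting from the trivial cover by the $r$ monochromatic components through a fixed vertex $v$, the entire content of the conjecture is the saving of a single component, and your inductive step never produces it. "Find a second vertex $u$ and a color $i$ such that one component can be dropped" is a restatement of the goal, not an argument; you give no criterion for choosing $u$ and $i$ and no reason the residual coloring admits an inductive handle. An outline that ends by conceding that the induction "loses control" and that a full proof is "beyond a short outline" is a research plan, not a proof, and must be judged as containing a total gap.

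One specific step in your plan is also contradicted by the paper. You propose to adapt the spanning/antichain/bi-equivalence reductions of Section~\ref{equi} from the bipartite setting to the complete-graph setting, but the paper explicitly remarks, just after introducing bi-equivalence graphs, that ``a similar reduction is not known for Conjecture~\ref{rys}.'' The mechanism behind claim A relies on the bipartite structure: a non-biclique monochromatic component contains a non-adjacent pair $x\in X$, $y\in Y$ joined by an edge of another color, and counting stars at $x$ and $y$ saves two components against the budget $2r-2$. In $K_n$ every pair of vertices is adjacent, so there is no analogous non-edge to exploit, and the budget is $r-1$ rather than $2r-2$, so even a one-component saving per step would not obviously suffice. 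Your fallback --- reducing the conjecture to a structural statement about extremal intersecting $r$-partite hypergraphs with $\tau=r-1$ --- is reasonable as a direction but is not carried out, and is in any case where the open problem already lives.
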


There are infinitely many examples of intersecting $r$-partite hypergraphs with transversal number equal to $r-1$. Take a finite projective plane of order $q$, then truncate it by removing one point and the incident $q+1$ lines. The remaining lines taken as edges define an intersecting  $(q+1)$-partite hypergraph with transversal number equal to $q$. (Note that the truncated projective plane is the dual of an affine plane.)
A related question,  finding $f(r)$,
the minimum number of edges among intersecting $r$-partite
hypergraphs with transversal number at least $r-1$,  was addressed
in \cite{MSY}, where it was shown that $f(3)=3, f(4)=6,$ and $ f(5)=9$. \\

Concerning our biclique cover conjectures, the dual of a spanning partition of a complete bipartite graph
into $r$ bi-equivalence graphs gives two $r$-partite
hypergraphs, ${\cal{H}}_1,{\cal{H}}_2$ on the same vertex set such
that for every $h_1\in E({\cal{H}}_1), h_2\in E({\cal{H}}_2)$,
$|h_1\cap h_2|=1$ holds,  moreover at each vertex there is at
least one edge from both hypergraphs. We call such hypergraph
pairs $1$-cross intersecting. Then Conjecture \ref{bipconn} restated in claim B
reads as follows:

\begin{conjecture}
\label{quickproof} Let ${\cal{H}}_1,{\cal{H}}_2$ be a pair of $1$-cross
intersecting $r$-partite hypergraphs. Then
$\tau({\cal{H}}_1\cup {\cal{H}}_2)\le 2r-2$.
\end{conjecture}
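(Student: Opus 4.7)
The plan is to prove Conjecture \ref{quickproof} by establishing a bijective dictionary between $1$-cross intersecting pairs of $r$-partite hypergraphs and spanning bi-equivalence partitions of bicliques, under which transversals of $\mathcal{H}_1 \cup \mathcal{H}_2$ translate to covers of $X \cup Y$ by monochromatic biclique components. Under this correspondence, Conjecture \ref{quickproof} is simply a restatement of claim B of Section \ref{equi}, hence of Conjecture \ref{bipconn}, so the result then follows from the equivalent conjectures once the dictionary is verified.

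To set up the dictionary, start from a spanning partition of the biclique $[X,Y]$ into bi-equivalence graphs $G_1, \ldots, G_r$. Let $V_i$ be the set of connected components of $G_i$ and $V = V_1 \sqcup \cdots \sqcup V_r$. For each $x \in X$, let $h(x)$ be the $r$-element subset of $V$ consisting, for each $i$, of the unique component of $G_i$ containing $x$; let $\mathcal{H}_1$ have edge set $\{h(x) : x \in X\}$, and define $\mathcal{H}_2$ analogously from $Y$. Both hypergraphs are $r$-partite on $V$ with parts $V_1, \ldots, V_r$. For $x \in X$ and $y \in Y$, the intersection $h(x) \cap h(y)$ is the single component, in the color of the edge $xy$, that contains both endpoints, giving $|h(x) \cap h(y)| = 1$. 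The spanning hypothesis forces every component to contain at least one vertex of $X$ and one of $Y$, so every vertex of $V$ appears in an edge of $\mathcal{H}_1$ and of $\mathcal{H}_2$. The converse construction takes a $1$-cross intersecting pair $\mathcal{H}_1, \mathcal{H}_2$, forms the biclique with vertex classes $E(\mathcal{H}_1), E(\mathcal{H}_2)$, colors the edge $h_1 h_2$ by the index of the unique part of $V$ meeting $h_1 \cap h_2$, and checks that each color class is a bi-equivalence graph.

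With the dictionary in hand the transversal-cover equivalence is immediate: a set $T \subseteq V$ meets $h(x)$ iff at least one component from $T$ contains $x$, and similarly for $y \in Y$, so $\tau(\mathcal{H}_1 \cup \mathcal{H}_2) \le 2r-2$ is exactly the assertion that the corresponding biclique can be covered by at most $2r-2$ monochromatic components. The main obstacle is that Conjecture \ref{bipconn} is itself open for $r \ge 6$; my plan therefore yields an unconditional theorem only for $r \le 5$, via Theorem \ref{smallr}. The payoff of the hypergraph reformulation is not a simplification but a translation: it exposes the problem to tools from extremal set theory and cross-intersecting system bounds (in the spirit of F\"uredi's work cited by the authors), which may eventually yield attacks not available in the biclique formulation.
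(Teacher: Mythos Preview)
Your proposal is correct in content and matches the paper's treatment: the statement is a \emph{conjecture}, not a theorem, and the paper does not prove it either. The paper simply asserts that Conjecture~\ref{quickproof} is ``Conjecture~\ref{bipconn} restated in claim~B'' via duality, without spelling out the dictionary; you have made that dictionary explicit (and correctly), and you rightly note that an unconditional result is available only for $r\le 5$ via Theorem~\ref{smallr}. So there is no gap---but also no proof, because none exists for general $r$.

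One small remark on your converse construction: when you build the biclique on vertex classes $E(\mathcal H_1)$ and $E(\mathcal H_2)$ and color $h_1h_2$ by the index of the part containing $h_1\cap h_2$, you should also verify the spanning property. This uses the clause in the paper's definition of $1$-cross intersecting that ``at each vertex there is at least one edge from both hypergraphs'': for $h_1\in E(\mathcal H_1)$ and color $i$, the vertex $h_1\cap V_i$ lies in some $h_2\in E(\mathcal H_2)$, so $h_1$ has a color-$i$ neighbour. With that check the equivalence is complete and your write-up is a faithful expansion of what the paper leaves implicit.
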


 To illustrate the advantage of the dual formulation, here is a
 quick proof showing that $h(r)$ is bounded (although with a bound weaker
 than the one  in Theorem \ref{main}).

\begin{proposition}
\label{quickproof} Let ${\cal{H}}_1,{\cal{H}}_2$ be a pair of $1$-cross
intersecting $r$-partite hypergraphs. Then each
partite class contains at most ${2(r-1)\choose r-1}$ vertices.
\end{proposition}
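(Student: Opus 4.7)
The plan is to reduce the bound directly to the Bollobás set-pair inequality, which states that if $(A_v, B_v)_{v \in I}$ is a family of pairs with $|A_v| = a$, $|B_v| = b$, $A_v \cap B_v = \emptyset$ for all $v$, and $A_u \cap B_v \neq \emptyset$ for all $u \neq v$, then $|I| \le \binom{a+b}{a}$.

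Fix a partite class $V_i$. For each $v \in V_i$, use the spanning property of the pair $(\mathcal{H}_1, \mathcal{H}_2)$ to pick an edge $h_1(v) \in E(\mathcal{H}_1)$ and an edge $h_2(v) \in E(\mathcal{H}_2)$ with $v \in h_1(v) \cap h_2(v)$. By the $1$-cross intersecting property, $|h_1(v) \cap h_2(v)| = 1$, so in fact $h_1(v) \cap h_2(v) = \{v\}$. Define
\[
A_v = h_1(v) \setminus \{v\}, \qquad B_v = h_2(v) \setminus \{v\},
\]
which are disjoint sets of size $r-1$ each, and both lie in $V \setminus V_i$ (since $h_1(v), h_2(v)$ each meet $V_i$ only in $v$).

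The key verification is the cross condition. For $u \neq v$ in $V_i$, the single vertex $w$ forming $h_1(u) \cap h_2(v)$ cannot lie in $V_i$: if $w \in V_i$, then $w$ would equal both the unique $V_i$-vertex of $h_1(u)$, namely $u$, and the unique $V_i$-vertex of $h_2(v)$, namely $v$, contradicting $u \neq v$. Therefore $w \in A_u \cap B_v$ and $A_u \cap B_v \neq \emptyset$, as required by Bollobás. Applying the inequality with $a = b = r-1$ gives
\[
|V_i| \le \binom{2(r-1)}{r-1}.
\]

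There is no real obstacle here beyond setting up the translation correctly — the only point that requires care is confirming that the intersection $h_1(u) \cap h_2(v)$ avoids $V_i$, so that the single vertex provided by the $1$-cross intersecting property really lands inside $A_u \cap B_v$.
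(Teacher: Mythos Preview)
Your proof is correct and follows essentially the same route as the paper: pick one edge from each hypergraph through each vertex of the fixed partite class, delete that vertex, and apply the cross-intersecting (Bollob\'as-type) bound, which the paper cites as Exercise~13.32 in Lov\'asz. You are simply more explicit than the paper in verifying that $h_1(v)\cap h_2(v)=\{v\}$ and that the common vertex of $h_1(u)\cap h_2(v)$ falls outside $V_i$, but the argument is the same.
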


\begin{proof} Let $v_1,\dots,v_p$ be the vertices of a partite class
of ${\cal{H}}_1,{\cal{H}}_2$. For each $v_i$ select $f_i^1\in E({\cal{H}}_1),
f_i^2\in E({\cal{H}}_2)$ such that $v_i\in f_i^1\cap f_i^2$, and set
$g_i=f_i^1\setminus\{v_i\}, h_i=f_i^2\setminus\{v_i\}$. Then the pairs
$(g_i,h_i)$ form a cross-intersecting $r-1$-uniform family (in fact a very
special one). It is well known (see Exercise 13.32 in \cite{LO}) that such hypergraphs have at most
${2(r-1)\choose r-1}$ edges.
\end{proof}

\end{document}